\documentclass[11pt]{amsart}

\usepackage[dvipsnames]{xcolor}
\usepackage{enumitem}
\usepackage{amssymb,graphicx,mathtools}
\usepackage{float}
\usepackage{tikz}
\usetikzlibrary{calc,arrows.meta,decorations.markings}
\usepackage[margin=1in]{geometry}
\usepackage{etoolbox}

\usepackage[square,longnamesfirst]{natbib}

\theoremstyle{plain}
\newtheorem{theorem}{Theorem}[section]
\newtheorem{proposition}[theorem]{Proposition}

\newtheorem{lemma}[theorem]{Lemma}
\theoremstyle{definition}
\newtheorem{remark}[theorem]{Remark}
\newtheorem{problem}{Problem}

\usepackage[
citecolor=PineGreen,
colorlinks=true,
linkcolor=RoyalBlue,
urlcolor=BrickRed
]{hyperref}

\graphicspath{{figures/}}
\allowdisplaybreaks

\newcommand{\Z}{\mathbb Z}
\newcommand{\R}{\mathbb R}
\newcommand{\T}{\mathbb T}
\newcommand{\E}{\mathbb E}
\renewcommand{\P}{\mathbb P}
\newcommand{\1}{\mathbf 1}
\newcommand{\ip}[2]{\langle #1,#2\rangle}
\newcommand{\norm}[1]{\lVert #1\rVert}
\newcommand{\abs}[1]{\lvert #1\rvert}
\newcommand{\dd}{\,\mathrm d}
\newcommand{\Hs}{\mathcal H}

\newcommand{\Hminus}[1]{\norm{#1}_{-1,\lambda}}
\newcommand{\Hplus}[1]{\norm{#1}_{1,\lambda}}
\newcommand{\defn}[1]{\textbf{#1}}

\tikzset{
	ledge/.style={draw=black!22,line width=0.5pt},
	lvert/.style={circle,fill=black,inner sep=1.1pt}
}

\makeatletter
\patchcmd{\@settitle}{\uppercasenonmath\@title}{\Large}{}{}
\patchcmd{\@setauthors}{\MakeUppercase}{\large}{}{}
\makeatother

\title{The randomly oriented Manhattan lattice in 2D is transient}
\author{Ahmed Bou-Rabee}
\author{Yuval Peres}

\begin{document}
	
	\begin{abstract}
		Independently orient each horizontal and vertical line of $\Z^2$ by a fair
		coin.  A walker chooses one of the two lines through its current position
		with equal probability and takes one step in the direction of that line.
		Redner \citeyearpar{Red89} introduced this walk as a model of transport in
		an isotropic random velocity field and predicted that its root-mean-square
		displacement grows like $n^{2/3}$.  We prove that the walk is transient
		almost surely.
		The proof is a short variational argument.
	\end{abstract}
	
	\maketitle
	
	\begin{figure}[!b]
		\centering
		\begin{tikzpicture}[x=1.3cm,y=1.3cm,
			street/.style={draw=black!40,line width=1.3pt},
			head/.style={draw=black!75,line width=1.1pt,-{Stealth[length=2.8mm]}},
			move/.style={draw=RoyalBlue,line width=2.0pt,-{Stealth[length=3.2mm]}},
			site/.style={circle,fill=black,draw=white,line width=1.0pt,inner sep=2.4pt}]
			\foreach \y in {0,...,3} \draw[street] (-0.5,\y) -- (4.5,\y);
			\foreach \x in {0,...,4} \draw[street] (\x,-0.5) -- (\x,3.5);
			\foreach \y/\ms in {0/{-0.25,0.5,1.5,2.5,3.5,4.25},
				2/{-0.25,0.5,1.5,3.5,4.25},
				3/{-0.25,0.5,1.5,2.5,3.5,4.25}}
			\foreach \m in \ms \draw[head] (\m-0.14,\y) -- (\m+0.14,\y);
			\foreach \m in {-0.25,0.5,1.5,2.5,3.5,4.25}
			\draw[head] (\m+0.14,1) -- (\m-0.14,1);
			\foreach \x/\ms in {1/{-0.25,0.5,1.5,2.5,3.25},
				2/{-0.25,0.5,1.5,3.25},
				4/{-0.25,0.5,1.5,2.5,3.25}}
			\foreach \m in \ms \draw[head] (\x,\m-0.14) -- (\x,\m+0.14);
			\foreach \x in {0,3} \foreach \m in {-0.25,0.5,1.5,2.5,3.25}
			\draw[head] (\x,\m+0.14) -- (\x,\m-0.14);
			\draw[move] (2.14,2)--(2.90,2);
			\draw[move] (2,2.14)--(2,2.90);
			\node[site] at (2,2) {};
		\end{tikzpicture}
		\caption{One fair coin directs each line, so every edge of a line points the
			same way.  The two blue arrows are the possible steps from the marked site.}
		\label{fig:model}
	\end{figure}
	
	\section{Introduction}\label{sec:introduction}
	
	The \defn{randomly oriented Manhattan lattice} is the square lattice with
	every horizontal and vertical line given a direction by an independent fair
	coin.  We study the random walk that at each step chooses one of the two
	lines through its position with equal probability and moves one unit in that
	line's direction (Figure~\ref{fig:model}). Once a line is oriented, every later step along it has the same direction.
	Figure~\ref{fig:path} compares the walk with simple random walk.
	
	\begin{figure}[t]
		\centering
		
		\includegraphics[width=0.5\textwidth]{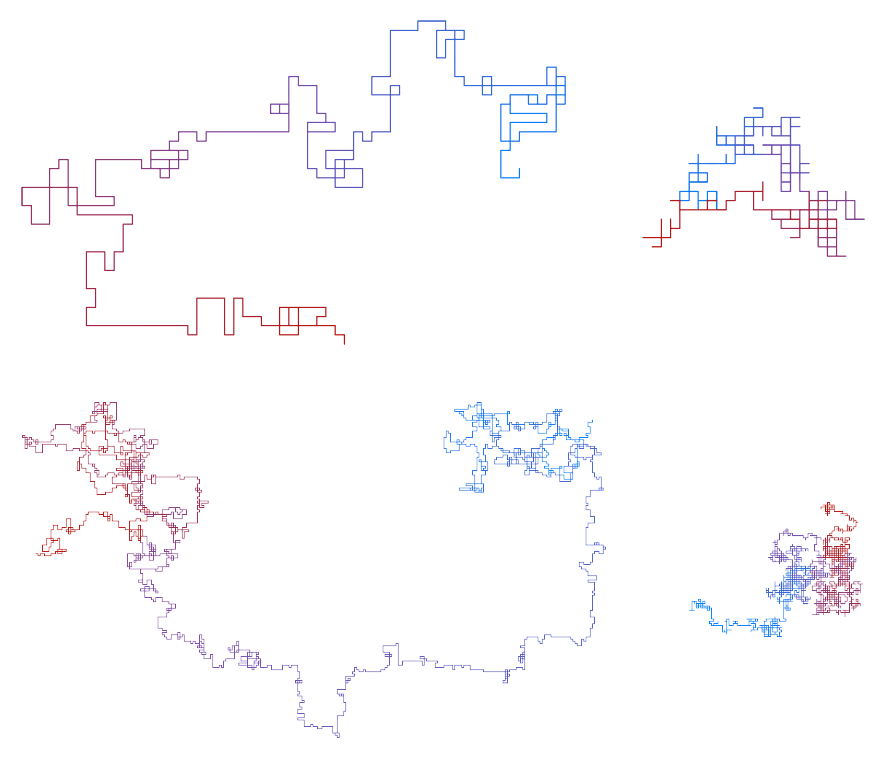}
		\caption{The randomly oriented Manhattan walk on the left and simple random
			walk on the right, after $400$ steps in the top row and $4000$ steps in the
			bottom row, colored by time from blue to red.  The two panels of each row are
			drawn to the same scale.}
		\label{fig:path}
	\end{figure}
	
	\citet*{Red89} introduced this walk and predicted a
	non-Gaussian displacement profile on the scale $n^{2/3}$.  In particular,
	this prediction implies transience of the walk. 
	
	\subsection{Main results}\label{ssec:main-results}
	
	Write $e_1,e_2$ for the standard basis of $\Z^2$, and record the
	line orientations by
	$\omega\colon\Z^2\times\{1,2\}\to\{-1,1\}$: the line through $z$ parallel
	to $e_i$ points along $\omega(z,i)e_i$.  We call $\omega$ the
	\defn{environment} and write $\P$ for its law.
	
	From $z$ the walker steps to
	\begin{equation}\label{eq:rule}
		z+\omega(z,1)e_1\qquad\text{or}\qquad z+\omega(z,2)e_2,
		\qquad\text{with probability $1/2$ each.}
	\end{equation}
	This can be understood as a random walk with a random drift which is divergence free.

	Write $P^\omega_z$ for the quenched law of the walk started at $z$ in a fixed
	environment $\omega$, and write $p^\omega_n(z,z')$ for the probability under
	$P^\omega_z$ that the walk is at $z'$ after $n$ steps.
	Averaging over $\omega$ gives the annealed transition probability
	$\overline p_n(z,z')=\E p^\omega_n(z,z')$.
	
	\begin{theorem}\label{thm:main}
		For $\P$-almost every environment $\omega$,
		\[
		\sum_{n\geq0}p^\omega_n(x,x)<\infty\qquad\text{for every }x\in\Z^2\, .
		\]
	\end{theorem}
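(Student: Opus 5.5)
We argue variationally, via the Dirichlet-principle characterization of the Green function for non-reversible chains whose invariant measure is uniform. First, counting measure is invariant for every $\omega$. Each site $z'$ has exactly one horizontal in-neighbour, $z'-\omega(z',1)e_1$, which lies on the same horizontal line and therefore has the same orientation. Likewise $z'$ has exactly one vertical in-neighbour. So the transition matrix $P$ is doubly stochastic.

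Decompose the generator $L=P-I$ into its symmetric part $S=\tfrac12(P+P^*)-I$ and its antisymmetric part $A=\tfrac12(P-P^*)$. The symmetric part $S$ is the generator of the walk jumping to each of the four neighbours with probability $1/4$. That is, $-S$ is the simple random walk Laplacian, up to a factor. For the operator $L$ with symmetric part $S$ one has the variational formula
\[
G_\lambda(x,x)\;=\;\ip{\1_x}{(\lambda-L)^{-1}\1_x}\;=\;\sup_{f}\Bigl\{2f(x)-\ip{f}{(\lambda-S)f}-\Hminus{A f}^2\Bigr\},
\]
where $\Hminus{\cdot}$ is the dual norm of $\Hplus{f}^2=\ip{f}{(\lambda-S)f}$. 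Transience at $x$ is equivalent to $\sup_\lambda G_\lambda(x,x)<\infty$, so an upper bound requires controlling the supremum. I would instead use the dual formulation,
\[
\ip{\1_x}{(\lambda-L)^{-1}\1_x}=\inf_{g}\Bigl\{\ip{g}{(\lambda-S)g}+\Hminus{\1_x-(\lambda-L^*)\ldots}^2\Bigr\}\quad\text{(Thomson-type principle)},
\]
which reduces the problem to exhibiting one good test function/flow. A cleaner route is to produce a unit ``flow'' $\theta$ from $x$ to infinity that has finite energy in the non-reversible sense. Concretely, one needs $h$ with $\Hplus{h}<\infty$ and $\Hminus{\1_x-(S+A)h}$ small, or more precisely $\1_x=-(S-A)h+\text{(divergence of an $L^2$ field)}$.

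Since SRW in 2D is only barely recurrent (Green function growing like $\log$), the gain must come from $A$. The plan is to take $h=\sum_k h_k$ built on dyadic scales $2^k$. Each $h_k$ is roughly a smooth bump of height $1/k$ at scale $2^k$, which is the standard test for the $\log$-divergence. Then use the random antisymmetric part to show that the $\Hminus{\cdot}$ penalty along these functions is finite. The key computation is to bound $\E\,\Hminus{Ah_k}^2$ using independence of line orientations. $A$ acts as $\tfrac12\sum_i\omega(\cdot,i)\partial_i$, a random divergence-free drift whose line averages cancel. Annealed second moments should give a decay like $2^{-ck}$ per scale, or at least summable after weighting. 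Borel--Cantelli then gives an almost-sure statement, and $x\mapsto$ all $x$ follows by irreducibility (every site reaches every other, since rows and columns alternate orientations infinitely often a.s.).

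The main obstacle is the middle step: choosing the test object so that the nonreversible correction beats the logarithmic divergence of the symmetric energy. The random drift has long-range correlations along whole lines, and these make naive second-moment bounds only logarithmic. I would first try a test function that is the sum over lines of one-dimensional profiles adapted to each line's orientation, so that $A$ acting on it produces a transverse ``transport'' term. I would then estimate $\Hminus{\cdot}$ of the remainder using the explicit Fourier symbol of $\lambda-S$ and the fact that $\omega(\cdot,1)$ depends only on the second coordinate.
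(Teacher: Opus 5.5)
You have the right framework. The paper also uses a non-reversible resolvent principle in its ``inf'' form, $\ip u{(\lambda-L)^{-1}u}\le\Hplus g^2+\Hminus{u-Ag}^2$ for every $g$ (Lemma~\ref{lem:variational-bound}; only $A$, not $S+A$, enters the second term). But the proposal stops exactly where the proof has to begin. No test function is produced, and the one you sketch aims at the wrong target. $\Hminus{\1_x}^2$ is the simple-random-walk resolvent, of order $\log(1/\lambda)$. So a bounded right-hand side forces $\Hminus{Ag}\geq\Hminus{\1_x}-O(1)\to\infty$: $Ag$ must reproduce $\1_x$ at frequencies $\sqrt\lambda\lesssim\abs p\ll1$ while $\Hplus g$ stays bounded. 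Proving that $\E\Hminus{Ah_k}^2$ is small or summable is therefore useless for an upper bound. Worse, bumps of height $1/k$ at scale $2^k$ are finite-energy functions that are unbounded at $x$; they are the witnesses of planar recurrence. Controlling $\Hminus{Ah}$ along them would, through the sup formula you wrote, show $G_\lambda(x,x)\to\infty$.

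The missing idea is a test function that actually beats the logarithm, and the available gain is only polylogarithmic, not $2^{-ck}$ per scale. Pass to the environment seen from the walker and Fourier transform in position (Proposition~\ref{prop:generator}). Your fallback, a sum over lines of profiles adapted to their orientations, is then essentially the degree-one part $f=\sum_yf_y\omega(ye_2,1)$ of the paper's test function. It is needed, because the constant component $s\int\varphi\dd m$ of $A_pf$ is what cancels the $1$. It is not enough, though. $A_pf$ also has a horizontal--vertical component $w(r)=\sin(r)\varphi(r)$ costing $\int_{\T^2}w^2/B$, with $B=\lambda+d(r)+d(\beta)$ and $\int_\T\dd m(\beta)/B\asymp1/\abs r$ for $\abs r\geq\sqrt\lambda$. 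So the weight on $\varphi^2$ is of order $\abs r$ and $Z_\delta$ is only of order $\log(1/\delta)$. The optimized bound is then of order $[\lambda+a^2\log(1/\delta)]^{-1}$, whose frequency integral grows like $\log\log(1/\lambda)$.

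The paper closes this gap with the degree-three correction $k$ of Lemma~\ref{lem:parity}, built from two horizontal signs and one vertical sign. Its lowering absorbs part of $w$. Parity ($\varphi$ even, $K$ odd in $\beta$ and under $(r,r')\mapsto(-r,-r')$) kills $(D_2^*k)_{11}$ and the coincident-index correction. Completing the square against its energy and quartic cost $\int M\abs K^2$ replaces $B$ by $W=B+\frac{\sin^2\beta}{4\pi}\log\bigl(1+\frac{\pi}{\delta+\abs r+\abs\beta}\bigr)$. This gives $\int_\T\dd m(\beta)/W\lesssim1/(\abs r\sqrt{\log(1/\abs r)})$ and $Z_\delta\asymp[\log(1/\delta)]^{3/2}$. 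Hence $r_\lambda(p)\lesssim[a^2(1+\log(1/a))^{3/2}]^{-1}$ for $\sqrt\lambda<a<1/4$, which is integrable over $\T^2$ uniformly in $\lambda$.

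Borel--Cantelli and irreducibility are then unnecessary. Tonelli and the Poisson clock turn the annealed bound into $\E\sum_np^\omega_n(0,0)<\infty$, and translation invariance handles every $x$.
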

	
	We prove the above theorem by analyzing a continuous time version of the
	walk, which jumps across each of its two outgoing edges at rate $1$. 
	Write $p^\omega_t(z,z')$ for its transition probabilities and set
	$\overline p_t(0,0)=\E p^\omega_t(0,0)$.  We deduce Theorem~\ref{thm:main}
	from the following annealed bound.
	
	\begin{theorem}\label{thm:annealed}
		For every $\lambda\in(0,1]$,
		\[
		\int_0^\infty e^{-\lambda t}\overline p_t(0,0)\dd t\leq2048\, ,
		\]
		and hence $\int_0^\infty\overline p_t(0,0)\dd t<\infty$.
	\end{theorem}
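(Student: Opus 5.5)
The plan is to pass to the environment viewed from the walker, or more precisely to work on the product space $\Z^2\times\Omega$. This space carries the invariant measure $\mu=\text{counting}\otimes\P$, and on it we reduce the annealed resolvent to a variational problem. Consider the continuous-time generator
\[
Lu(z,\omega)=\sum_{i=1}^2\bigl(u(z+\omega(z,i)e_i,\omega)-u(z,\omega)\bigr).
\]
The drift is divergence free: every site has exactly one incoming and one outgoing edge on each line. Hence $\mu$ is invariant, and the adjoint of $L$ in $L^2(\mu)$ is the walk with all orientations reversed. Write $L=S+A$ with
\[
S=\tfrac12\Delta,\qquad Au(z)=\tfrac12\sum_i\omega(z,i)\bigl(u(z+e_i)-u(z-e_i)\bigr).
\]
Here $S$ is (half) the simple random walk Laplacian, independent of $\omega$, and $A$ is antisymmetric. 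With $f=\1_{\{z=0\}}$ we get $\int_0^\infty e^{-\lambda t}\overline p_t(0,0)\dd t=\ip{f}{(\lambda-L)^{-1}f}_\mu$. The standard variational formula for nonsymmetric resolvents then gives
\[
\ip{f}{(\lambda-L)^{-1}f}=\sup_u\Bigl\{2\ip{f}{u}-\Hplus{u}^2-\Hminus{Au}^2\Bigr\},
\]
where $\Hplus{u}^2=\ip{u}{(\lambda-S)u}$ and $\Hminus{\cdot}$ is the dual norm. Dropping the $A$-term gives only the logarithmically divergent simple random walk bound. The whole point is therefore a lower bound showing that $\Hminus{Au}$ penalizes any $u$ that concentrates enough mass near the origin to make $2\ip{f}{u}=2\E u(0,\cdot)$ large.

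To bound $\Hminus{Au}$ from below I will use duality, $\Hminus{Au}\ge \ip{Au}{g}/\Hplus{g}$, with test functions adapted to the line structure. The natural choice is $g(z,\omega)=\omega(z,1)\,h(z)$, or the analogous function with $i=2$, where $h$ is a smooth bump at a scale $R$ to be optimized. Since $\omega(z,1)$ depends only on the row $z_2$, these are ``line-weighted'' functions. Using $\omega(z,1)^2=1$ and antisymmetry, $\ip{Au}{g}=-\ip{u}{Ag}$, and $Ag$ splits into two pieces. The diagonal piece is $\tfrac12\,\partial_1^{\mathrm{sym}}h$ and carries no randomness. The cross piece $\tfrac12\omega(z,1)\omega(z,2)\,\partial_2^{\mathrm{sym}}(\omega(\cdot,1)h)$ involves a product of independent signs. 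The diagonal piece pairs $u$ against a discrete derivative of $h$, so it detects whether $u$ has a large horizontal gradient. Meanwhile $\Hplus{g}^2$ costs about $\lambda\norm{h}_2^2$ plus the Dirichlet energy of $\omega(\cdot,1)h$. In the vertical direction that energy is of order $\norm{h}_2^2$, since neighbouring rows have independent signs, and it is this cost that must be balanced.

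The plan then has three steps. First, from $\Hplus{u}$ alone, control the averaged profile $\overline u(z)=\E u(z,\cdot)$ and its oscillation, as in the simple random walk Green's function estimate. Second, use the two families of test functions ($i=1,2$) to show that $\Hplus{u}^2+\Hminus{Au}^2\ge c\,(\E u(0,\cdot))^2$, uniformly in $\lambda\in(0,1]$. Third, optimize the quadratic $2a-c\,a^2$ to get the bound $1/c$, with $c=1/2048$ coming out of explicit (non-optimized) constants in the test-function estimates. The step I expect to be the main obstacle is the second one. The difficulty is the cross term, because $u$ is allowed to depend on $\omega$ and may correlate with $\omega(z,1)\omega(z,2)$.

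To handle the cross term, I will first try to bound it by Cauchy--Schwarz against $\Hplus{u}$, using that $\omega(\cdot,1)\omega(\cdot,2)$ has unit size and that only discrete derivatives of $h$ appear. This should make it a lower-order error once the scale $R$ is chosen in terms of $\Hplus{u}$. If that fails, the fallback is to symmetrize over the reflection $\omega(\cdot,2)\mapsto-\omega(\cdot,2)$, which preserves $\P$ and leaves $S$ invariant, to kill the cross term on average.

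Finally, the bound is uniform in $\lambda\in(0,1]$, so letting $\lambda\downarrow0$ and applying monotone convergence gives $\int_0^\infty\overline p_t(0,0)\dd t<\infty$, which is the second claim of the theorem.
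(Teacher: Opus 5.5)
Your reduction is correct, but it only relocates the problem. For a fixed $g$, inserting $\Hminus{Au}^2\ge2\ip{Au}{g}-\Hplus{g}^2$ and maximizing over $u$ gives exactly $\Hplus{g}^2+\Hminus{\1_0+Ag}^2$, with $\1_0$ your $f$. This is the position-space form of Lemma~\ref{lem:variational-bound}, so everything rests on the test functions, and Step~2 is false for yours.

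Take $g=\omega(\cdot,1)h_1+\omega(\cdot,2)h_2$ with arbitrary $h_1,h_2$, possibly depending on $u$. Then $Ag=\tfrac12(\partial_1^{\mathrm{sym}}h_1+\partial_2^{\mathrm{sym}}h_2)+v_1+v_2$, where
\[
v_1(z)=\tfrac12\,\omega(z,2)\bigl[\omega(z+e_2,1)h_1(z+e_2)-\omega(z-e_2,1)h_1(z-e_2)\bigr]
\]
(without the extra factor $\omega(z,1)$ you wrote), and $v_2$ is analogous. Independence of the lines gives $\Hplus{g}^2\ge\norm{h_1}_2^2+\norm{h_2}_2^2$; this is the transverse energy you noticed. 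The same independence also gives
$\Hminus{\tfrac12\partial_1^{\mathrm{sym}}h_1}^2=\int_{\T^2}\sin^2p_1\,\abs{\widehat h_1(p)}^2/(\lambda+\theta(p))\dd m(p)\le2\norm{h_1}_2^2$
and
$\Hminus{v_1}^2=\tfrac12\norm{h_1}_2^2\,[\kappa_\lambda(0)-\kappa_\lambda(2e_2)]\le2\norm{h_1}_2^2$,
where $\kappa_\lambda$ is the kernel of $(\lambda-S)^{-1}$. The same bounds hold for $h_2$.

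Hence $\Hminus{Ag}\le C\Hplus{g}$ with $C$ absolute, and $\ip{Au}{g}^2=\ip{u}{Ag}^2\le C^2\Hplus{u}^2\Hplus{g}^2$. So every lower bound on $\Hminus{Au}^2$ that this family can produce is at most $C^2\Hplus{u}^2$, whatever $h_i$ and $R$ are. Step~2 would then imply $(1+C^2)\Hplus{u}^2\ge c\ip{\1_0}{u}^2$ for all $u$. This is false for small $\lambda$, because $\sup_u\ip{\1_0}{u}^2/\Hplus{u}^2=\Hminus{\1_0}^2=\int_{\T^2}\dd m(p)/(\lambda+\theta(p))\asymp\log(1/\lambda)$.

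In the paper's Fourier picture, your $g$ is at each frequency a combination of the signs $\omega(0,1),\omega(0,2)$ of the walker's own lines (a constant $\varphi$). It improves the simple random walk bound $1/(\lambda+\theta(p))$ only by a constant factor. The obstruction is already present in the diagonal piece: a single row sign is rough at the lattice scale in the transverse direction, so its energy swamps what it detects. Neither Cauchy--Schwarz on the cross piece nor symmetrization can rescue this. In any case, flipping $\omega(\cdot,2)$ alone is not a symmetry of $L$, since it reverses the vertical drifts.

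What is missing is the construction of a good test function, which is the content of Sections~\ref{sec:evaluation}--\ref{sec:fixed-frequency}. At a fixed frequency, in the environment seen from the walker, the paper takes $g=f+k$.
\begin{itemize}
\item The degree-one part $f=\sum_yf_y\,\omega(ye_2,1)$ uses the signs of many rows around the walker. Its profile is slowly varying and depends on the frequency: its line-frequency transform is $-i\varphi$ with $\varphi\propto1/q$ on $\Gamma_\delta$. This keeps the transverse energy small.
\item The degree-three part $k$ (two row signs and one column sign) is built by parity in Lemma~\ref{lem:parity}. It has $(D_2^*k)_{11}=0$, while $(D_2^*k)_{12}$ cancels part of your cross term, which in these coordinates is $(D_1f)_{12}=\sin(r)\varphi(r)$.
\end{itemize}
Both ingredients are needed. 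With $k=0$, optimizing over $\varphi$ gives only $r_\lambda(p)\lesssim1/(a^2\log(1/a))$, and the integral of this bound over frequencies still grows like $\log\log(1/\lambda)$. The correction adds the logarithm in $W$ and lowers the weight from $\abs r$ to $q(r)=\abs r/\sqrt{\log(1/\abs r)}$. This yields $Z_\delta\asymp[\log(1/\delta)]^{3/2}$ and the integrable bound of Proposition~\ref{prop:frequency}. No choice of the scale $R$ within your family can replace this.
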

	
	Note that transience does not hold for every deterministic environment.
	Indeed, take the alternating orientations, $\omega(z,1)=(-1)^{z_2}$ and
	$\omega(z,2)=(-1)^{z_1}$.  This walk can be mapped to simple random walk on
	$\Z^2$ as detailed in \citet*{BH23}, who report
	this as an observation of Guillotin-Plantard.
	
	\subsection{Notation and conventions}\label{ssec:notation}
	
	\begin{itemize}
		\item $\P$ is the law of the environment and $\E$ is the corresponding
		expectation.
		\item $\ip\cdot\cdot$ and $\norm\cdot$ denote the inner product and norm of the
		$L^2$ space which is indicated. 
		\item $c>0$ and $C>0$ are finite constants which may change from line to
		line.  Numerical constants in the quantitative bounds are displayed
		explicitly.
	\end{itemize}
	
	\subsection{Proof overview}\label{ssec:proof-overview}
	
	For reversible walks, the usual technique to show transience is to exhibit a
	unit flow from a vertex to infinity with finite energy, see, for example,
	\citep*[Theorem~2.11]{LP16}.  In our nonreversible setting, we instead use a
	variational formula for the resolvent, given, for example, in
	\citep*[Theorem~4.1]{KLO12}.  Variational principles for operators that are
	not self-adjoint are standard in homogenization; see
	\citet*[Chapter~10]{AKM19}, \citet*[(2.4)]{ABK24}, and
	\citet*[(2.32)]{ABK26}.  	
	
	The variational formula bounds the annealed Green function using a test
	function at each spatial frequency.  We build the test function from the
	line orientations.  It is not a flow and satisfies no additional constraint:
	every square-integrable function of the orientations can be used.
	The difficulty is to choose one that makes the bound small.
	
	The zero test function in this variational principle gives the same bound as for simple random walk, which is too weak: its
	frequency integral diverges logarithmically. To improve on this, we must exploit the enhancement of the drift by careful choice of test function.  
	This choice is made from a family of functions formed from products of one and three line signs, indexed by one even function of a single
	frequency. As we will see, a careful optimization of the even function will make the frequency integral converge.

	\medskip
	\noindent\textbf{Step 1: A variational bound at each frequency.}
	Fix $\lambda\in(0,1]$.  The law of the environment is translation invariant,
	so we may Fourier transform in the position of the walk and treat each
	frequency on its own.  Write $\T\coloneqq(-\pi,\pi]$ with normalized Haar measure
	$\dd m$, and $\dd m(p)=\dd m(p_1)\dd m(p_2)$ on $\T^2$.  Write $T_x$ for the shift of the
	environment by $x$.  At a frequency $p\in\T^2$ the generator of the walk acts on
	$L^2(\P)$ as $S_p+A_p$, where
	\begin{align*}
		S_p&\coloneqq\frac12\sum_{i=1}^2
		\bigl(e^{ip_i}T_{e_i}+e^{-ip_i}T_{-e_i}-2I\bigr),\\
		A_p&\coloneqq\frac12\sum_{i=1}^2\omega(0,i)
		\bigl(e^{ip_i}T_{e_i}-e^{-ip_i}T_{-e_i}\bigr)
	\end{align*}
	(Proposition~\ref{prop:generator}).  Here $S_p$ is self-adjoint and
	nonpositive and $A_p$ is skew-adjoint.  Let $H\coloneqq\lambda I-S_p$ and
	$r_\lambda(p)\coloneqq\operatorname{Re}\ip1{(H-A_p)^{-1}1}$. The annealed Green function is the
	average of $r_\lambda$ over the frequencies,
	\[
	\int_0^\infty e^{-\lambda t}\overline p_t(0,0)\dd t
	=\int_{\T^2}r_\lambda(p)\dd m(p)
	\]
	(Proposition~\ref{prop:generator}).  Every $g\in L^2(\P)$ gives an upper
	bound (Lemma~\ref{lem:variational-bound}),
	\begin{equation}\label{eq:overview-bound}
		r_\lambda(p)\leq
		\ip g{Hg}+\ip{1-A_pg}{H^{-1}(1-A_pg)}\, .
	\end{equation}
	The first term is the energy of $g$, and the second is
	$\norm{H^{-1/2}(1-A_pg)}^2$.  We seek $g$ for which both terms are small.
	
	Taking $g=0$ makes the energy zero and leaves $1-A_pg=1$, which
	gives the simple random walk bound $r_\lambda(p)\leq[\lambda+\theta(p)]^{-1}$
	with $\theta(p)\coloneqq2-\cos p_1-\cos p_2$.  Averaging this upper bound
	over $p$ gives order $\log(1/\lambda)$.  We therefore need a nonzero test function to
	obtain a bound whose integral stays finite as $\lambda\downarrow0$. 
	
	\medskip
	\noindent\textbf{Step 2: A family of test functions.}
	To improve the bound from $g=0$, we take $g=f+k$, where
	\begin{equation}\label{eq:overview-test-family}
	  \begin{split}
	    f(\omega)&=\sum_{y\in\Z}f_y\omega(ye_2,1),\\
	    k(\omega)&=\sum_{\substack{x,y,y'\in\Z\\y<y'}}
	      k_{y,y',x}\omega(ye_2,1)\omega(y'e_2,1)\omega(xe_1,2)\, .
	  \end{split}
	\end{equation}
	Here $\omega(z,i)$ is the orientation of the line through $z$ parallel
	to $e_i$.  Thus $f$ is linear in the horizontal orientations, while $k$
	combines products of two distinct horizontal orientations and one vertical
	orientation.  We take the coefficient arrays to be square summable, so
	that $f,k\in L^2(\P)$.
	
	The role of $f$ is to make the constant component of $A_pg$
	close to $1$.  Recall that the Fourier--Walsh expansion writes each
	$F\in L^2(\P)$ in the
	orthonormal basis of products of orientations $\omega(z,i)$ of finitely
	many distinct lines; see, for example, \citet*[Theorem~1.5]{ODo14}.
	The formula for $A_p$ in Step~1 shifts these products and multiplies
	them by an orientation.  Multiplication either introduces that
	orientation or cancels it if already present, since $\omega(z,i)^2=1$.
	Thus only terms containing a single orientation can produce a
	constant term under $A_p$.  Note that \citet*[(2.1)]{LTV18} also use linear functions of the horizontal
	orientations to bound the Laplace transform of mean-square displacement
	from below.
	
	Each term of $f$ contains one orientation, and $A_p$ either removes
	that orientation or introduces a second one.  Thus $A_pf$ has only
	constant and quadratic components.  It remains to control the quadratic
	terms in $1-A_pf$.  We bound the
	contribution from products of two horizontal orientations directly.
	The role of $k$ is to reduce the contribution from products of a horizontal and a vertical
	orientation. We index the coefficients $(f_y)_{y\in\Z}$
	in Fourier coordinates using a real even function $\varphi\in L^2(\T,m)$.
	The correction $k$ also contributes energy and creates quartic terms
	in $A_pk$.  Lemma~\ref{lem:parity} constructs $k$ from $\varphi$ and
	shows that the bound for products of a horizontal and a vertical
	orientation gains a logarithmic term in its denominator, even after
	these additional contributions are included.  Thus $\varphi$ specifies $g=f+k$. The rest of the proof is to choose $\varphi$. 
	
	\medskip
	\noindent\textbf{Step 3: Optimizing and integrating over frequencies.}
	We choose $\varphi$ to balance cancellation of the constant term
	against the energy and the nonconstant terms in
	\eqref{eq:overview-bound}.  Lemma~\ref{lem:effective-energy} bounds
	$r_\lambda(p)$ by a quadratic functional of $\varphi$.  We choose
	$\varphi$ to minimize this functional; Section~\ref{sec:fixed-frequency}
	carries out the minimization. In particular, Proposition~\ref{prop:frequency} gives, for
	$a\coloneqq\max\{\abs{p_1},\abs{p_2}\}$,
	\begin{equation*}
	  r_\lambda(p)\leq\frac{4096}{a^2[1+\log(1/a)]^{3/2}}
	  \qquad(\sqrt\lambda<a<1/4)\, .
	\end{equation*}
	Since $3/2>1$, the integral of this bound over
	$\sqrt\lambda<a<1/4$ stays bounded as $\lambda\downarrow0$.
	The estimate from $g=0$ gives a uniformly bounded contribution from
	the remaining frequencies.  Letting $\lambda\downarrow0$ therefore
	gives a finite annealed Green function, proving
	Theorem~\ref{thm:annealed}.
	
	\subsection{Related results}\label{ssec:related-work}
	
	\subsubsection{Randomly oriented Manhattan lattices}\label{sssec:oriented}
	As discussed above, the displacement scale $n^{2/3}$ predicted by \citet*{Red89}
	would imply transience of the walk if accompanied by an annealed local limit: the return
	probability would then have order $n^{-4/3}$.  Simulations support this decay
	\citep*[Section~3.2]{MNOV20}; further discussions of the displacement
	prediction appear in \citet*{BG90} and
	\citet*[Section~3.5]{Pen20}.  The transience conjecture is attributed
	to Guillotin-Plantard in \citet*[Appendix~A]{Bos19}.
	
	In the mathematics literature, the Laplace transform of the annealed
	mean-square displacement has been bounded rigorously by 
	\citet*[Theorem~1.1]{LTV18} in dimensions two and three.  In dimension three,
	\citet*[Theorem~1.1]{Ngu26} obtained order
	$\lambda^{-2}\sqrt{\abs{\log\lambda}}$, up to multiplicative factors
	$(\log\abs{\log\lambda})^{\pm(2+\varepsilon)}$ for every
	$\varepsilon>0$ and all sufficiently small $\lambda$.
	These estimates establish superdiffusivity in the Laplace-transform sense, but do not immediately yield bounds on mean-square displacement. Such bounds alone do not control the probability of returning to the origin.
	
	When the horizontal orientations are independent and uniform on
	$\{-1,1\}$ and the vertical orientations alternate deterministically,
	the walk is almost surely transient \citep*[Theorem~9]{Bos19}.
	On the randomly oriented Manhattan lattice, following both orientations at every
	step gives the deterministic update
	$z\mapsto z+\omega(z,1)e_1+\omega(z,2)e_2$; this walk is eventually
	two-periodic for $\P$-almost every environment
	\citep*[Theorem~1]{CHT19}.  Quantum and classical network models on
	the alternating deterministic lattice are studied in
	\citet*{BOC03}.
	
	Our proof uses the nonreversible resolvent variational formula
	\citep*[Theorem~4.1]{KLO12} that \citet*[Section~2]{LTV18} use for displacement bounds.
	Polynomial test functions similar to ours also appear in the study of asymmetric exclusion
	\citep*[Sections~3--4]{LQSY04} \citep*[Section~2]{Yau04} and the
	stationary one-dimensional self-repelling Brownian polymer
	\citep*{TTV12}.  Variational formulas for
	resolvents are discussed in \citet*[(39)]{Tot18}, and their connection
	with capacities and transience in \citet*[Theorem~2.4]{GL14}.
	
	\subsubsection{Partially oriented lattices}
	
	If only the horizontal lines are oriented, the walker can move up,
	down, or in the direction of its current horizontal line.  Choosing
	uniformly among these three moves makes the vertical coordinate a
	lazy simple random walk independent of the orientations.  One can
	therefore describe the visits to horizontal lines before using their
	directions to determine the horizontal displacement.  This is a
	discrete version of the Matheron--de Marsily model
	\citep*{MdM80}.  For the walk studied here, each coordinate
	depends on the orientations encountered by the other, so the same
	separation is unavailable.
	
	With independent horizontal orientations uniform on $\{-1,1\}$,
	the partially oriented walk is almost surely transient
	\citep*[Theorem~1.8]{CP03} and has zero speed
	\citep*[Theorem~2.11]{CP04}.  The horizontal displacement grows on
	a different scale from the vertical displacement:
	\citet*[Theorem~2]{GLN08F} proved an annealed joint functional limit
	theorem with scales $n^{3/4}$ and $n^{1/2}$, respectively.
	The limiting coordinates are dependent, even though the vertical
	motion is independent of the orientations.  The annealed return
	probability has order $n^{-5/4}$ 
	\citep*[Theorem~20]{CGPS11}.
	
	For the partially oriented walk, \citet*[Theorem~1]{GLN08T} extend almost-sure transience from independent
	horizontal orientations uniform on $\{-1,1\}$ to orientations whose
	probabilities are sampled along a stationary orbit of a dynamical system.  The orientations
	are independent conditional on the orbit, but can be dependent after
	averaging over it.  Their theorem assumes that each orientation has
	mean zero and that the probabilities satisfy the integrability
	condition (2.3).  \citet*{Bre17} allows all transition
	probabilities to depend on the layer.  In such models, changing only
	departures from the horizontal axis can turn a transient walk into
	a recurrent one \citep*[Corollary~2.6]{BHP22}; see
	also \citet*[Proposition~7.8]{Bre17}.
	
	\subsubsection{Diffusion in incompressible random flows}
	\label{sssec:divfree}
	
	Our walk has divergence-free drift, making it a discrete analogue of a
	Brownian particle transported by a stationary incompressible random flow
	\citep*{BGKPR90}.  For a class of incompressible random drifts with covariance
	decay $R^{-2(1-\gamma)}$ at distance $R$, \citet*{ABK26} obtained moment
	estimates over the environment for the quenched mean-square displacement
	at scale $t^{2/(2-\gamma)}$, for sufficiently small positive $\gamma$.
	For the $d$-dimensional version of our walk, the covariance of each drift
	component is constant along its coordinate axis and zero elsewhere.
	Its average over a box of side $R$ centered at the origin therefore has
	order $R^{1-d}$.
	Matching these exponents gives the heuristic correspondence
	\begin{equation}\label{eq:gamma-dimension}
		\gamma=\frac{3-d}{2}\, .
	\end{equation}
	The renormalization-group predictions of \citet*{BG90}, as recalled in
	\citet*{ABK26}, then give mean-square displacement of order $t^{4/3}$ in
	dimension two, $t(\log t)^{1/2}$ in dimension three, and $t$ in higher
	dimensions.  In dimension two this recovers Redner's displacement exponent.
	It remains unclear how to adapt the methods of \citet*{ABK26} to our
	setting, since their argument requires sufficiently small positive $\gamma$.

	\subsection{Formalization}\label{ssec:formalization}
	
	Every theorem, lemma and proposition of this paper has been formalized
	in Lean~4 and proved from Mathlib, together with the recurrence assertion
	in Problem~\ref{prob:which}.  No proof uses \texttt{sorry} or any axiom
	beyond the three of Lean's own logic.  The development is available at
	\url{https://github.com/nitromannitol/Manhattan-Transience}.
	
	\section*{Acknowledgments}
	The research of Y. Peres was supported by National Natural Science Foundation of China grant
	RFIS-W2531011.  We acknowledge use of ChatGPT 5.6.
	
	\section{The variational bound}\label{sec:variational}
	
	\subsection{Fourier transform}\label{ssec:fourier}
	
	For a fixed environment and $h\colon\Z^2\to\R$, the generator is
	\begin{equation}\label{eq:discrete-drift}
		\begin{split}
			(\mathcal G_\omega h)(z)
			&=\frac12\sum_{i=1}^2
			\bigl[h(z+e_i)+h(z-e_i)-2h(z)\bigr]\\
			&\quad+\frac12\sum_{i=1}^2\omega(z,i)
			\bigl[h(z+e_i)-h(z-e_i)\bigr]\, .
		\end{split}
	\end{equation}
	On $\ell^2(\Z^2)$, the first sum is the self-adjoint simple-random-walk
	generator.  The second sum is skew-adjoint because $\omega(z,i)$ is
	constant along $e_i$.
	
	For $x\in\Z^2$, define the environment shift and its action on functions by
	\[
	(T_xF)(\omega)=F(\tau_x\omega),\qquad
	(\tau_x\omega)(z,i)=\omega(z+x,i)\, .
	\]
	The operator $T_x$ is unitary on $L^2(\P)$, and $T_{\pm e_i}$ commutes
	with multiplication by $\omega(0,i)$ because that sign is constant along
	the line parallel to $e_i$.
	
	We keep track of both the walker's position and the environment seen
	from that position.  The generator of this pair on
	$L^2(\P;\ell^2(\Z^2))$ is
	\begin{equation}\label{eq:joint-generator}
		(\mathcal GF)(\omega,z)=\sum_{i=1}^2
		\bigl[F(\tau_{\omega(0,i)e_i}\omega,z+\omega(0,i)e_i)-F(\omega,z)\bigr]\, .
	\end{equation}
	For returns to the origin, set
	$\delta_0(\omega,z)=\1_{\{z=0\}}$.  Then, the inner product being that of
	$L^2(\P;\ell^2(\Z^2))$,
	\begin{equation}\label{eq:return-correlation}
		\overline p_t(0,0)=\ip{\delta_0}{e^{t\mathcal G}\delta_0}\, .
	\end{equation}
	Under the Fourier transform
	$\widehat F(\omega,p)=\sum_z e^{-ip\cdot z}F(\omega,z)$, the function
	$F(\tau_x\omega,z+x)$ becomes $e^{ip\cdot x}T_x\widehat F$.
	
	\begin{proposition}\label{prop:generator}
		After Fourier transformation in the position variable, $\mathcal G$ acts
		at each frequency $p\in\T^2$ as $G_p=S_p+A_p$, where
		\begin{align}
			S_p&=\frac12\sum_{i=1}^2
			\bigl(e^{ip_i}T_{e_i}+e^{-ip_i}T_{-e_i}-2I\bigr),\label{eq:Sp}\\
			A_p&=\frac12\sum_{i=1}^2\omega(0,i)
			\bigl(e^{ip_i}T_{e_i}-e^{-ip_i}T_{-e_i}\bigr)\, .\label{eq:Ap}
		\end{align}
		Uniformly in $p$, the operator $S_p$ is self-adjoint and nonpositive,
		$A_p^*=-A_p$, $\norm{S_p}\leq4$, and $\norm{A_p}\leq2$.
		Let $d(s)\coloneqq1-\cos s$ and $\theta(p)\coloneqq d(p_1)+d(p_2)$.
		For every $\lambda>0$,
		\begin{align}
			\int_0^\infty e^{-\lambda t}\overline p_t(0,0)\dd t
			&=\int_{\T^2}r_\lambda(p)\dd m(p),\label{eq:green}\\
			r_\lambda(p)&\coloneqq
			\operatorname{Re}\ip1{(\lambda I-G_p)^{-1}1}\, .\label{eq:frequency-resolvent}
		\end{align}
	\end{proposition}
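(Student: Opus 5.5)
The plan is to compute the Fourier conjugate of $\mathcal G$ directly, read off the adjointness properties from the shift structure, and then obtain \eqref{eq:green} by Plancherel together with the Laplace transform of the semigroup.

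\textbf{Step 1: the Fourier conjugate.} Write the jump $\omega(0,i)e_i$ in \eqref{eq:joint-generator} as a sum over $\sigma=\pm1$:
\[
\tfrac12(1+\sigma\omega(0,i))\bigl[F(\tau_{\sigma e_i}\omega,z+\sigma e_i)-F(\omega,z)\bigr].
\]
By the transformation rule stated before the proposition, $F(\tau_x\omega,z+x)$ becomes $e^{ip\cdot x}T_x\widehat F$. The multiplier $\tfrac12(1+\sigma\omega(0,i))$ depends only on $\omega$, so it passes through the Fourier transform in $z$ unchanged. Collecting the terms not multiplied by $\omega(0,i)$ gives $S_p$, and collecting those multiplied by $\omega(0,i)$ gives $A_p$, since $\sum_\sigma \tfrac12\sigma\,\omega(0,i)e^{i\sigma p_i}T_{\sigma e_i}$ is exactly the $i$-th summand of \eqref{eq:Ap}. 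The subtracted terms contribute $-\sum_i\sum_\sigma\tfrac12(1+\sigma\omega(0,i))=-2$, which is the $-2I$ in $S_p$ (the $\omega$ parts cancel).

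\textbf{Step 2: adjointness and norm bounds.} Each $T_x$ is unitary with $T_x^*=T_{-x}$, so $(e^{ip_i}T_{e_i})^*=e^{-ip_i}T_{-e_i}$, and $S_p$ is self-adjoint. For $A_p$, let $M_i$ denote multiplication by $\omega(0,i)$; it is self-adjoint and commutes with $T_{\pm e_i}$. Then
\[
(M_i(e^{ip_i}T_{e_i}-e^{-ip_i}T_{-e_i}))^*=(e^{-ip_i}T_{-e_i}-e^{ip_i}T_{e_i})M_i=-M_i(e^{ip_i}T_{e_i}-e^{-ip_i}T_{-e_i}),
\]
so $A_p^*=-A_p$. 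Nonpositivity follows from
\[
\operatorname{Re}\ip{g}{e^{ip_i}T_{e_i}g}\le\norm g^2,
\]
which gives $\ip{g}{S_pg}\le\tfrac12\sum_i(2\norm g^2-2\norm g^2)=0$. The norm bounds follow from the triangle inequality: $\norm{S_p}\le\tfrac12\cdot2\cdot4=4$ and $\norm{A_p}\le\tfrac12\cdot2\cdot2=2$.

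\textbf{Step 3: the resolvent identity \eqref{eq:green}.} Since $\widehat{\delta_0}=1$, Plancherel in $z$ applied to \eqref{eq:return-correlation} gives
\[
\overline p_t(0,0)=\int_{\T^2}\ip{1}{e^{tG_p}1}_{L^2(\P)}\dd m(p).
\]
This uses that the Fourier transform conjugates $e^{t\mathcal G}$ to the fibered family $e^{tG_p}$, which holds because $\mathcal G$ is bounded and fibered. The main care point is this fiber decomposition and measurability in $p$; both are routine because $p\mapsto G_p$ is norm-continuous. Dissipativity of $G_p$ gives $\norm{e^{tG_p}}\le1$, which justifies Fubini/Tonelli. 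Finally, $\int_0^\infty e^{-\lambda t}e^{tG_p}\dd t=(\lambda I-G_p)^{-1}$ in norm for $\lambda>0$. The left side of \eqref{eq:green} is real, so we may take real parts, and this yields $r_\lambda$.
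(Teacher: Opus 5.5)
Your proposal is correct and follows essentially the same route as the paper's proof. You split each jump according to the value of the sign $\omega(0,i)$, use unitarity of the shifts and their commutation with the line signs for the operator assertions, and apply Plancherel with $\widehat{\delta_0}=1$ plus the Laplace transform of the semigroup. Your use of dissipativity ($\norm{e^{tG_p}}\leq1$) to justify Fubini, and your remark that the left side is real so one may take real parts, fill in details the paper leaves implicit.
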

	
	\begin{proof}
		Splitting \eqref{eq:joint-generator} according to the two values of each
		sign gives \eqref{eq:Sp} and \eqref{eq:Ap}.  Unitarity of the shifts and
		their commutation with the corresponding signs give the operator
		assertions.  The Fourier transform of $\delta_0$ is $1$, so Plancherel's
		theorem and integration of \eqref{eq:return-correlation} against
		$e^{-\lambda t}\dd t$ give \eqref{eq:green}.
	\end{proof}
	
	\subsection{The fixed-frequency bound}\label{ssec:varbound}
	
	Fix $\lambda>0$ and $p\in\T^2$, and let
	\begin{equation}\label{eq:H}
		H\coloneqq\lambda I-S_p,\qquad
		\lambda I\leq H\leq(\lambda+4)I,\qquad
		H1=(\lambda+\theta(p))1\, .
	\end{equation}
	Thus $H^{-1}$ and $H^{-1/2}$ are bounded, and we set
	\begin{equation}\label{eq:Hnorms}
		\Hplus g^2\coloneqq\ip g{Hg},\qquad
		\Hminus u^2\coloneqq\ip u{H^{-1}u}\, .
	\end{equation}
	
	\begin{lemma}\label{lem:variational-bound}
		For every $\lambda>0$, $p\in\T^2$, and $g\in L^2(\P)$,
		\begin{equation}\label{eq:var}
			0\leq r_\lambda(p)\leq\Hplus g^2+\Hminus{1-A_pg}^2\, .
		\end{equation}
	\end{lemma}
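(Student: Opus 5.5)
The plan is to write $u\coloneqq(H-A_p)^{-1}1$, so that $r_\lambda(p)=\operatorname{Re}\ip1u$, and to complete a square in the $H$-geometry.  Since $\lambda I-G_p=H-A_p$, and $H\geq\lambda I$ while $A_p$ is skew-adjoint (Proposition~\ref{prop:generator}), we have $\operatorname{Re}\ip v{(H-A_p)v}=\ip v{Hv}\geq\lambda\norm v^2$.  Hence $H-A_p$ is invertible by Lax--Milgram, and $u$ is well defined.  All operators are bounded, so no domain issues arise.

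For the lower bound, pair the equation $(H-A_p)u=1$ with $u$ and take real parts.  Because $\operatorname{Re}\ip u{A_pu}=0$, this gives $\operatorname{Re}\ip u1=\ip u{Hu}\geq0$.  Since $\operatorname{Re}\ip1u=\operatorname{Re}\ip u1$, we get $r_\lambda(p)=\Hplus u^2\geq0$.

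For the upper bound, fix $g$ and set $w\coloneqq u-g$.  From $Hu=1+A_pu$ we get
\[
Hw=1-A_pg+A_pw-Hg+A_pg+\ldots
\]
Rather than track every term this way, it is cleaner to write $Hw=(1-A_pg)-Hg+A_pw$ and compute $\Hplus w^2=\operatorname{Re}\ip w{Hw}$.  Because $\operatorname{Re}\ip w{A_pw}=0$, this yields
\[
\Hplus w^2=\operatorname{Re}\ip w{1-A_pg}-\operatorname{Re}\ip w{Hg}.
\]
We also have $\Hplus u^2=\operatorname{Re}\ip u1$, with $u=w+g$.  Expanding $\Hplus{w+g}^2=\Hplus w^2+2\operatorname{Re}\ip w{Hg}+\Hplus g^2$ and substituting the identity for $\Hplus w^2$ gives
\[
r_\lambda=\Hplus u^2=\Hplus g^2+\operatorname{Re}\ip w{1-A_pg}+\operatorname{Re}\ip w{Hg}.
\]

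The step I expect to need care is choosing which combination to complete the square on.  The cleaner route is to start from
\[
r_\lambda=\operatorname{Re}\ip1u=\operatorname{Re}\ip{1-A_pg}u+\operatorname{Re}\ip{A_pg}u.
\]
Skew-adjointness gives $\operatorname{Re}\ip{A_pg}u=-\operatorname{Re}\ip g{A_pu}$.  Substituting $A_pu=Hu-1$ turns this into $-\operatorname{Re}\ip g{Hu}+\operatorname{Re}\ip g1$.  I would then combine these pieces with $\Hplus u^2=r_\lambda$ to reach the identity
\[
r_\lambda=\Hplus g^2+\Hminus{1-A_pg}^2-\Hminus{(1-A_pg)-H(u-g)}^2+\text{(cross terms cancelling by skewness)}.
\]
The sign of the discarded square then gives \eqref{eq:var}.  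I would verify this identity by direct expansion, which is routine once $\operatorname{Re}\ip v{A_pv}=0$ and $A_p^*=-A_p$ are used systematically.

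As a fallback, apply Cauchy--Schwarz in the dual pair $(\Hplus\cdot,\Hminus\cdot)$ to the representation $r_\lambda=\operatorname{Re}\ip{1-A_pg}u+\operatorname{Re}\ip g{Hu}-\operatorname{Re}\ip g{A_p^{*}\ldots}$.  Together with $r_\lambda=\Hplus u^2$, this gives $r_\lambda\leq(\Hminus{1-A_pg}+\Hplus g)\sqrt{r_\lambda}$.  That yields the bound up to a factor $2$, which would be enough for all later uses.
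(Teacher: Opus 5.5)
Your setup is sound. Lax--Milgram gives $u=(H-A_p)^{-1}1$, the lower bound $r_\lambda(p)=\operatorname{Re}\ip1u=\Hplus u^2\geq0$ is correct, and completing a square in the $H$-geometry is exactly the paper's strategy. The gap is that the identity you plan to ``verify by direct expansion'' is false, so that step fails. With $q\coloneqq1-A_pg$, direct expansion (using $Hu-1=A_pu$) gives
\[
\Hplus g^2+\Hminus q^2-\Hminus{q-H(u-g)}^2=\Hplus u^2+4\operatorname{Re}\ip g{A_pu}\, .
\]
The extra term is not of the form $\operatorname{Re}\ip v{A_pv}$, so skewness does not cancel it. Its sign is indefinite, so discarding the square proves nothing. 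A scalar example on $\mathbb C$ shows this: with $H=1$, $A_p=i$, $g=1$, one has $u=(1+i)/2$ and $r_\lambda=1/2$, while the left side equals $-3/2$. The source is a sign slip in your first route. With $w=u-g$, the equation $(H-A_p)u=1$ gives $Hw=(1+A_pg)-Hg+A_pw$, not $(1-A_pg)-Hg+A_pw$.

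Your fallback does not rescue this. Completed correctly, its representation reads $r_\lambda=\operatorname{Re}\ip qu-\operatorname{Re}\ip g{Hu}+\operatorname{Re}\ip g1$. The term $\operatorname{Re}\ip g1$ is not controlled by $\Hplus g\sqrt{r_\lambda}$ unless $g\perp1$; that holds for the paper's $g=f+k$ but is not assumed in the lemma. Moreover, a factor-$2$ loss would not give \eqref{eq:var} as stated.

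The repair is a single sign, and it gives the paper's proof. Put $d\coloneqq u+g$. Then $Hu-A_pd=1-A_pg=q$, so $\operatorname{Re}\ip qd=\operatorname{Re}\ip{Hu}d$ because $\operatorname{Re}\ip{A_pd}d=0$. Expanding $u=d-g$ then gives the exact identity
\[
\Hplus u^2=\Hplus g^2+2\operatorname{Re}\ip qd-\Hplus d^2=\Hplus g^2+\Hminus q^2-\Hminus{q-Hd}^2\, ,
\]
and \eqref{eq:var} follows by dropping the last square (equivalently, by weighted Cauchy--Schwarz on $2\operatorname{Re}\ip qd-\Hplus d^2$). Your first route was in fact one line from done. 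With the corrected sign, eliminate $\operatorname{Re}\ip w{Hg}$ between your two displayed identities to get $r_\lambda=\Hplus g^2+2\operatorname{Re}\ip w{1+A_pg}-\Hplus w^2\leq\Hplus g^2+\Hminus{1+A_pg}^2$. Since $g$ is arbitrary, replacing $g$ by $-g$ gives the lemma.
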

	
	\begin{proof}
		Let $z\coloneqq(H-A_p)^{-1}1$, $d\coloneqq z+g$, and $q\coloneqq1-A_pg$.  Then
		$q=Hz-A_pd$, and skew-adjointness gives
		\[
		r_\lambda(p)=\operatorname{Re}\ip1z=\Hplus z^2,
		\qquad
		\operatorname{Re}\ip qd=\operatorname{Re}\ip{Hz}d\, .
		\]
		Expanding $z=d-g$ gives
		\[
		\Hplus z^2=\Hplus g^2
		+\left(2\operatorname{Re}\ip qd-\Hplus d^2\right)\, .
		\]
		Weighted Cauchy--Schwarz bounds the bracket by $\Hminus q^2$.
	\end{proof}
	
	\begin{remark}
		Taking $g=0$ gives
		\begin{equation}\label{eq:zero-function}
			r_\lambda(p)\leq\Hplus0^2+\Hminus1^2
			=\frac1{\lambda+\theta(p)}\, .
		\end{equation}
		This bound suffices when $p$ stays away from $0$.
	\end{remark}
	
	\section{Fourier--Walsh coordinates}\label{sec:coins}
	
	We evaluate the variational bound by expanding the test function in
	products of the orientations $\omega(x,j)$.  Fix $\lambda>0$ and $p\in\T^2$, and write
	$A=A_p$.
	
	For a finite set of lines, form the products over all subsets of their
	orientations, including the empty product $1$.  These products form an
	orthonormal basis for functions of those orientations
	\citep*[Theorem~1.5]{ODo14}.  Taking conditional expectations onto
	increasing finite sets that exhaust all lines and applying $L^2$
	martingale convergence gives the Fourier--Walsh basis of $L^2(\P)$.
	
	\subsection{Degree and frequency}\label{ssec:degree}
	
	Index the horizontal lines by $(ye_2,1)$ and the vertical lines by
	$(xe_1,2)$, with $x,y\in\Z$.  For $n\geq0$, let $\Hs_n$ be the closed
	span of products of the orientations of $n$ distinct lines.
	Every $F\in\Hs_n$ has a square-summable coefficient array $f$, symmetric
	under permutations of the line indices $(x_a,j_a)$, such that
	\[
	F(\omega)=\sum_{\mathbf x,\mathbf j}f_{\mathbf j}(\mathbf x)
	\prod_{a=1}^n\omega(x_a,j_a)\, .
	\]
	The sum converges in $L^2(\P)$ and runs over ordered line indices;
	$f$ vanishes when two line indices coincide.
	If $F\in\Hs_n$ and
	$G\in\Hs_m$ have coefficients $f$ and $g$, then
	\begin{equation}\label{eq:iso}
		\ip FG=\1_{\{m=n\}}n!\ip fg_{\ell^2}\, .
	\end{equation}
	The factor $n!$ counts the orderings of the $n$ line indices.
	
	Multiplication by $\omega(0,i)$ introduces this factor into a product
	when it is absent and cancels it when it is present, since
	$\omega(0,i)^2=1$.  Consequently
	\begin{equation}\label{eq:DA}
		A=D-D^*,\qquad D\colon\Hs_n\longrightarrow\Hs_{n+1}\, .
	\end{equation}
	Here $D$ raises the degree by one and $-D^*$ lowers it by one, as in the
	decomposition in \citet*[(2.6)--(2.8)]{LQSY04} for the exclusion process
	at density $1/2$.
	To exclude repeated line indices after raising, let $\Pi_n$ denote the
	orthogonal projection onto coefficients supported on distinct line indices.
	Write $H_n$ and $D_n$ for the restrictions of $H$ and $D$ to $\Hs_n$.
	
	For the line coefficients we use the Fourier convention
	\begin{equation}\label{eq:line-fourier}
		\widehat f_{\mathbf j}(\boldsymbol\xi)
		=\sum_{\mathbf x}e^{i\sum_a x_a\cdot\xi_a}f_{\mathbf j}(\mathbf x)\, ,
	\end{equation}
	with convergence in $L^2$.  A line frequency $\xi_a$ has zero $j_a$th
	coordinate.  Simultaneous shifts now act by scalar multiplication.
	We drop the hats and also use $\Pi_n$ for the projection after Fourier
	transformation.
	For a degree-$n$ coefficient, the total frequency is
	\begin{equation}\label{eq:P}
		P=p+\xi_1+\cdots+\xi_n\, .
	\end{equation}
	Equations~\eqref{eq:Sp} and \eqref{eq:H} give
	\begin{equation}\label{eq:Hsym}
		\text{$H_n$ is multiplication by }\lambda+\theta(P)\, .
	\end{equation}
	We will use
	\begin{equation}\label{eq:sine}
		\log_+x=\max\{0,\log x\},
		\qquad \frac{2s^2}{\pi^2}\leq d(s)\leq\frac{s^2}{2}
		\quad (\abs s\leq\pi)\, .
	\end{equation}
	We identify $H_0$ with the scalar $\lambda+\theta(p)$ by which it acts
	on constants.
	
	\begin{lemma}\label{lem:raise}
		Let $\widetilde D_n$ be the raising operator computed without imposing
		distinct line indices.  Then
		\begin{equation}\label{eq:raise}
			(\widetilde D_nf)_{j_1\dots j_{n+1}}
			=\frac i{n+1}\sum_{a=1}^{n+1}\sin(P_{j_a})
			f_{j_1\dots\widehat{j_a}\dots j_{n+1}}\, ,
		\end{equation}
		where $P$ is the total frequency after raising and the hat omits the
		$a$-th line index.  Moreover,
		$D_n=\Pi_{n+1}\widetilde D_n\Pi_n$, and $\Pi_n$ commutes with every bounded
		multiplier that is a function of $P$.
	\end{lemma}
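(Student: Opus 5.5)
We compute $\widetilde D_n$ directly from \eqref{eq:Ap} on a single Walsh monomial, then symmetrize and pass to line frequencies. Write $A=\sum_i\omega(0,i)B_i$ with $B_i\coloneqq\frac12(e^{ip_i}T_{e_i}-e^{-ip_i}T_{-e_i})$. Apply $B_i$ to a degree-$n$ element with coefficients $f_{\mathbf j}(\mathbf x)$. The shift $T_{\pm e_i}$ sends the monomial $\prod_a\omega(x_a,j_a)$ to $\prod_a\omega(x_a\pm e_i,j_a)$. In coefficient form this translates all line indices simultaneously, so after the transform \eqref{eq:line-fourier} it becomes multiplication by a phase $e^{\mp i\sum_a\xi_a\cdot e_i}$ (the sign fixed by the convention). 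Lines parallel to $e_i$ are invariant under this shift, and this is consistent with their frequency having zero $i$th coordinate. Combined with the factor $e^{\pm ip_i}$, we get $B_i$ acting on the coefficient as multiplication by $\frac12(e^{iQ_i}-e^{-iQ_i})=i\sin Q_i$, where $Q=p+\xi_1+\dots+\xi_n$ is the total frequency before multiplication.

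Next, multiply by $\omega(0,i)$. Without the distinctness constraint, this appends the line index $(0,i)$ as a new $(n+1)$st index; this is exactly $\widetilde D_n$. Its Fourier transform in the new variable is constant, with $\xi_{n+1}$ free in the line $\{\xi:\xi_i=0\}$. The new index has type $j_{n+1}=i$, so $\xi_{n+1}$ has zero $i$th coordinate. Hence $P_i=Q_i$, and the sine can be written as $\sin(P_{j_{n+1}})$ in terms of the total frequency after raising. Summing over $i$ produces the term $i\sin(P_{j_{n+1}})f_{j_1\dots j_n}$, where the index type $i$ is now recorded as $j_{n+1}$. Finally, we symmetrize over the position of the new index, as the convention for $\Hs_{n+1}$ requires symmetric coefficients. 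This replaces the single term by $\frac1{n+1}\sum_a$ over the slot $a$ of the inserted index, which gives \eqref{eq:raise}. We should check that the normalization is consistent with \eqref{eq:iso}: the symmetrized array represents the same function, and the $1/(n+1)$ is precisely what averaging the $n+1$ placements gives.

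For $D_n=\Pi_{n+1}\widetilde D_n\Pi_n$: on $\Hs_n$ the input already has distinct indices, so $\Pi_n$ acts trivially. When the appended index $(0,i)$ coincides with an existing one, $\omega(0,i)^2=1$ lowers the degree; this contribution belongs to $-D^*$ by \eqref{eq:DA}, and the genuine degree-$(n+1)$ part is the projection onto distinct indices. In position space, $\Pi_n$ multiplies by an indicator of distinctness. For commutation, work in position space: a multiplier that is a function of $P$ is a convolution operator jointly translating all $x_a$ (its symbol depends on $\xi$ only through $\sum\xi_a$). Simultaneous translation preserves whether line indices are distinct, so it commutes with the distinctness indicator. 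The indicator depends on the types $j_a$ too, but translation preserves types, and bounded functions of $P$ are limits of trigonometric polynomials in $P$. Thus $\Pi_n$ commutes with them.

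The main obstacle is bookkeeping. We must fix the sign of the phase so that $B_i$ produces $+i\sin$ rather than $-i\sin$, and we must justify that the sine may be evaluated at the post-raising total frequency. The latter rests on the observation that the new line's frequency has vanishing $j_{n+1}$th coordinate.
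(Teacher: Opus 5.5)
Your proposal is correct and follows the paper's proof essentially step for step. The shift acts as $i\sin Q_i$ on the pre-raising total frequency, and the appended site-$0$ index has Fourier factor $1$ with vanishing $j_a$th frequency coordinate, so the sine may be read off at $P$; averaging over the slot of the new index gives $1/(n+1)$, $\Pi_{n+1}$ discards exactly the coincident indices, and commutation comes from joint translations preserving distinctness. Two small corrections: under \eqref{eq:line-fourier} the shift $T_{\pm e_i}$ is multiplication by $e^{\pm i\sum_a(\xi_a)_i}$, not $e^{\mp i\cdots}$, which is precisely what combines with $e^{\pm ip_i}$ to give $Q=p+\sum_a\xi_a$; and norm limits of trigonometric polynomials only reach continuous functions of $P$, so bounded Borel multipliers need the strong-operator (spectral calculus) extension, as in the paper.
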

	
	The commutation statement lets us remove a projection in upper bounds.
	If $\Pi$ is an orthogonal projection commuting with a bounded positive
	multiplier $N$ with bounded inverse, then
	\begin{equation}\label{eq:contract}
		\ip{\Pi u}{N^{-1}\Pi u}\leq\ip u{N^{-1}u},\qquad
		\ip{\Pi u}{N\Pi u}\leq\ip u{Nu}\, .
	\end{equation}
	
	\begin{proof}[Proof of Lemma~\ref{lem:raise}]
		By \eqref{eq:Ap}, raising first shifts and then multiplies by
		$\omega(0,i)$.  On a coefficient of total frequency $P'$, the shift
		contributes
		$\tfrac12(e^{iP'_i}-e^{-iP'_i})=i\sin(P'_i)$.  Multiplication appends a line
		index of type $i$ at the site $0$, whose Fourier factor is $1$.
		Symmetrization averages over the $n+1$ positions of the new index.  If it
		occupies position $a$, then $P'=P-\xi_a$ and $(\xi_a)_{j_a}=0$, so
		$P'_{j_a}=P_{j_a}$.  This gives \eqref{eq:raise}.  The new index must be
		distinct from the existing indices, which gives the projection $\Pi_{n+1}$.
		
		Simultaneous translations preserve distinctness, so $\Pi_n$ commutes
		with their multipliers.  Uniform approximation by trigonometric polynomials
		gives commutation with continuous functions of $P$ in operator norm;
		spectral calculus extends this to bounded Borel functions of $P$.
	\end{proof}
	
	\subsection{Degrees one and three}\label{ssec:degrees-one-three}\label{ssec:low-coordinates}
	
	We now take $g=f+k$ with $f\in\Hs_1$ and $k\in\Hs_3$.
	After separating the contribution of the constant component of
	$1-A_p(f+k)$, the remaining terms in \eqref{eq:var} are
	\begin{equation}\label{eq:E}
		E_p(f,k)=\ip f{H_1f}+\ip k{H_3k}
		+\Hminus{D_1f-D_2^*k}^2+\Hminus{D_3k}^2\, .
	\end{equation}
	Take $f$ and $k$ of the form \eqref{eq:overview-test-family}.
	On coefficients of products $\omega(ye_2,1)\omega(xe_1,2)$, Fourier
	transformation turns $H$ into multiplication by
	$\lambda+d(p_2+s)+d(p_1+u)$, where $s$ and $u$ are Fourier dual to $y$
	and $x$.  Let $s'$ be the frequency of the second horizontal line index
	in $k$.  We center the denominator by setting
	\begin{equation}\label{eq:shift}
		r=p_2+s,\qquad r'=p_2+s',\qquad \beta=p_1+u,
		\qquad\text{and}\qquad \alpha=r+r'-p_2\, .
	\end{equation}
	Here $r,r'$ are the shifted frequencies of the two horizontal line
	indices in $k$, and $\beta$ is the shifted frequency of its vertical
	line index.  Subscripts list the values of $j$ in the factors
	$\omega(x,j)$; for example, $112$ denotes two horizontal orientations
	and one vertical orientation.  The total frequencies are
	\[
	\renewcommand{\arraystretch}{1.15}
	\begin{array}{llll}
		\text{degree $0$:} & P=(p_1,p_2), &\quad
		\text{component $11$:} & P=(p_1,\alpha),\\
		\text{component $1$:} & P=(p_1,r), &\quad
		\text{component $12$:} & P=(\beta,r),\\
		& &\quad \text{component $112$:} & P=(\beta,\alpha)\, .
	\end{array}
	\]
	The shifts preserve Haar measure.  We use these variables throughout the
	coefficient calculations.

	\subsection{Low-degree operators}\label{ssec:low-operators}
	
	By \eqref{eq:iso}, multiplying the Fourier coefficients of components
	$11$, $12$, and $112$ by $\sqrt2$, $2$, and $\sqrt{18}$, respectively, makes
	the correspondence with Walsh expansions an isometry.  From now on,
	we identify each Walsh component with its normalized Fourier coefficient
	function.
	
	\begin{lemma}\label{lem:formulas}
		Let $k(r,r',\beta)$ be a normalized Fourier coefficient of component
		$112$, symmetric in $r,r'$.  In the variables \eqref{eq:shift} and the
		normalization above,
		\begin{align}
			\bigl(\widetilde D_2^*k\bigr)_{11}(r,r')
			&=-i\sin(\alpha)\int_\T k(r,r',\beta)\dd m(\beta),\label{eq:D2a}\\
			\bigl(\widetilde D_2^*k\bigr)_{12}(r,\beta)
			&=-i\sqrt2\sin(\beta)\int_\T k(r,r',\beta)\dd m(r')\, .\label{eq:D2b}
		\end{align}
		The first formula integrates out the frequency of the vertical line
		index; the second integrates out that of either horizontal line index.
		For $f$ linear in the orientations $\omega(ye_2,1)$,
		\begin{equation}\label{eq:D1}
			(D_0^*f)(p)=-i\sin (p_1)\int_\T f\dd m,\quad
			(\widetilde D_1f)_{11}=\tfrac i{\sqrt2}\sin (p_1)[f(r)+f(r')],\quad
			(\widetilde D_1f)_{12}=i\sin (r)f(r)\, .
		\end{equation}
	\end{lemma}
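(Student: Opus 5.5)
The plan is to obtain all four formulas from Lemma~\ref{lem:raise}, taking adjoints where lowering operators appear, and then to change to the normalized coordinates of Section~\ref{ssec:low-operators}. The raising formulas come first. For \eqref{eq:D1} with $f\in\Hs_1$ supported on horizontal lines, apply \eqref{eq:raise} with $n=1$. Suppose the new index is horizontal (type $j=1$). The factor is $\sin(P_1)$, where $P$ is the total frequency after raising. Since horizontal line frequencies have zero first coordinate, $P_1=p_1$. Averaging over the two positions gives $\tfrac i2\sin(p_1)[f(r)+f(r')]$ in raw coefficients. Multiplying by the normalization $\sqrt2$ for component $11$ yields $\tfrac i{\sqrt2}\sin(p_1)[f(r)+f(r')]$.

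Now suppose the new index is vertical (type $2$). Only the position carrying the vertical index contributes a term containing $f$ of the horizontal frequency, and its factor is $\sin(P_2)$. For component $12$ we have $P=(\beta,r)$, so $P_2=r$. There are two orderings, each with the prefactor $\tfrac i2$. The raw coefficient, as a symmetric array over ordered indices, is therefore $\tfrac i2\sin(r)f(r)$ in each ordering. The normalization factor $2$ then gives $i\sin(r)f(r)$.

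For $D_0^*f$, take the adjoint of $D_0$. Raising the constant $c$ gives $i\sin(p_1)c$ on a horizontal index placed at site $0$. In Fourier variables this is the constant function of $s$. Pairing with $f$ via \eqref{eq:iso} (with $n=1$) produces $\overline{i\sin p_1}\int f\dd m=-i\sin(p_1)\int_\T f\dd m$. The sign here is the one to watch, and the pairing convention with $e^{+ix\xi}$ in \eqref{eq:line-fourier} must be kept consistent.

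For \eqref{eq:D2a} and \eqref{eq:D2b}, compute $\widetilde D_2$ from degree $2$ into component $112$ and take its adjoint in the normalized inner products. Starting from component $11$ with coefficient $h(r,r')$, add a vertical index. By \eqref{eq:raise} with $n=2$, only the vertical position gives a term, with factor $\tfrac i3\sin(P_2)=\tfrac i3\sin\alpha$. This holds because $P=(\beta,\alpha)$ and the vertical line frequency has zero second coordinate. Summing over the $3$ positions of the vertical index in ordered arrays and inserting the normalizations ($\sqrt{18}$ for $112$, $\sqrt2$ for $11$) gives a raising map $h\mapsto i\sin(\alpha)h(r,r')$, constant in $\beta$. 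Its adjoint integrates over $\beta$ and conjugates $i$, which is \eqref{eq:D2a}.

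Similarly, start from component $12$ with coefficient $h(r,\beta)$ and add a horizontal index. The factor is $\sin(P_1)=\sin\beta$. The new index can be either of the two horizontal slots, so the result is a symmetrization in $r,r'$. After normalizing, the constants should combine to $i\tfrac{1}{\sqrt2}\sin\beta\,[h(r,\beta)+h(r',\beta)]$. Taking the adjoint against a $k$ that is symmetric in $r,r'$ then gives $-i\sqrt2\sin\beta\int k\dd m(r')$, which is \eqref{eq:D2b}.

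The main obstacle is bookkeeping rather than any idea. There are three sources of constants to track: the $1/(n+1)$ in \eqref{eq:raise}, the $n!$ in \eqref{eq:iso} that counts orderings, and the normalization factors $\sqrt2$, $2$, $\sqrt{18}$. They must be combined so that the stated constants $1/\sqrt2$, $1$, $\sqrt2$ emerge. I would check each constant by verifying $\ip{\widetilde D h}{k}=\ip h{\widetilde D^*k}$ explicitly on raw ordered arrays before normalizing. Note also that the formulas are stated for the tilde operators, so the distinctness projections are deliberately ignored here.
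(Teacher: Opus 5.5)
Your proposal is correct and follows essentially the paper's route. You apply \eqref{eq:raise} to get the raw raising coefficients; for component $112$ these are $\tfrac i3[\sin\beta\,h_{12}(r',\beta)+\sin\beta\,h_{12}(r,\beta)+\sin\alpha\,h_{11}(r,r')]$. You then rescale by $\sqrt2$, $2$, $\sqrt{18}$, take adjoints, and use the symmetry of $k$ in $(r,r')$ to produce the $\sqrt2$ in \eqref{eq:D2b}.

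One wording point: the factor $3$ from the positions of the vertical index is already inside $\sqrt{18}=\sqrt{3!\cdot 3}$. The coefficient $i\sin\alpha$ therefore comes from $\tfrac i3\cdot\sqrt{18}/\sqrt2$ alone, not from a further sum over positions. Your stated result is right, but the phrase ``summing over the $3$ positions'' reads as if that factor were applied a second time.
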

	
	\begin{proof}[Proof of Lemma~\ref{lem:formulas}]
		For a coefficient with subscript $112$, equation~\eqref{eq:raise} gives, before
		normalization,
		\[
		\frac i3\bigl[\sin\beta f_{12}(r',\beta)
		+\sin\beta f_{12}(r,\beta)+\sin\alpha f_{11}(r,r')\bigr]\, .
		\]
		Multiplying the coefficients with subscripts $11$, $12$, and $112$ by
		$\sqrt2$, $2$, and $\sqrt{18}=3\sqrt2$, then taking adjoints, gives
		\eqref{eq:D2a}--\eqref{eq:D2b}; symmetry in $(r,r')$ gives the factor
		$\sqrt2$ in \eqref{eq:D2b}.
		
		For the Fourier coefficient of $f$, the total frequency is $(p_1,r)$.
		Equation~\eqref{eq:raise} with $n=0$ gives
		$(\widetilde D_0u)_1=i\sin(p_1)u$, whose adjoint is the first formula in
		\eqref{eq:D1}.  Equation~\eqref{eq:raise} with $n=1$, followed by the
		degree-two normalization above, gives the other two formulas.
	\end{proof}
	
	\section{The test function}\label{sec:evaluation}
	
	Exchange rows and columns if necessary, and fix
	$\lambda\in(0,1]$ and $p\in\T^2$ with
	$a\coloneqq\abs{p_1}\geq\abs{p_2}$.
	Let
	\begin{equation}\label{eq:degree-one-parameters}
		\delta\coloneqq\sqrt\lambda+a,\qquad
		s\coloneqq\sin p_1,\qquad H_0\coloneqq\lambda+\theta(p)\, .
	\end{equation}
	We use the shifted variables of \eqref{eq:shift}; sums are taken modulo
	$2\pi$, and absolute values use representatives in $(-\pi,\pi]$.
	Let $\varphi\in L^2(\T,m)$ be real and even, and set
	\begin{equation}\label{eq:f}
		f(r)=-i\varphi(r),\qquad w(r)\coloneqq\sin(r)\varphi(r)\, .
	\end{equation}
	Equation~\eqref{eq:D1} gives
	\begin{equation}\label{eq:degree-one-images}
		D_0^*f=-s\int_\T\varphi\dd m,\qquad (D_1f)_{12}=w\, .
	\end{equation}
	We choose $k$ so that $(D_2^*k)_{12}$ approximates $w$ and
	$(D_2^*k)_{11}=0$.  The correction will add a logarithmic
	term to the denominator $\lambda+d(r)+d(\beta)$, giving
	\begin{equation}\label{eq:mixed-weight}
		\begin{split}
			W(r,\beta)\coloneqq{}&\lambda+d(r)+d(\beta)\\
			&+\frac{\sin^2\beta}{4\pi}
			\log\left(1+\frac\pi{\delta+\abs r+\abs\beta}\right)\, .
		\end{split}
	\end{equation}
	The following lemma bounds the sum of the energy of $k$, the quartic
	contribution $\Hminus{D_3k}^2$, and the remaining quadratic contribution.
	
	\begin{lemma}\label{lem:parity}
		For every $\lambda\in(0,1]$, $p\in\T^2$ with
		$\abs{p_2}\leq\abs{p_1}$, and real even
		$\varphi\in L^2(\T,m)$, there is $k\in\Hs_3$ of the form
		\eqref{eq:overview-test-family} such that
		$(D_2^*k)_{11}=0$ and
		\begin{equation}\label{eq:correction-cost}
			\begin{split}
				\Hplus k^2+\Hminus{D_3k}^2
				+\Hminus{(D_1f)_{12}-(D_2^*k)_{12}}^2
				\leq\int_{\T^2}\frac{w(r)^2}{W(r,\beta)}
				\dd m(r)\dd m(\beta)\, .
			\end{split}
		\end{equation}
	\end{lemma}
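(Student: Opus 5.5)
The plan is to construct $k$ explicitly, fiber by fiber in the shifted variables \eqref{eq:shift}, as a truncated version of the optimizer of a Schur-complement problem. The first step is to kill the $11$ component. By \eqref{eq:D2a}, $(D_2^*k)_{11}=\Pi_2(\widetilde D_2^*k)_{11}$ vanishes as soon as $\int_\T k(r,r',\beta)\dd m(\beta)=0$. I will take $k$ odd in $\beta$: a factor $\sin\beta$ times a function that is even in $\beta$. The multiplier for component $112$ is $\lambda+d(\beta)+d(\alpha)$, and $\alpha=r+r'-p_2$ does not involve $\beta$, so this multiplier is even in $\beta$ and the ansatz survives division by it.

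Concretely, I take
\[
k(r,r',\beta)=i\,\frac{\sin\beta\;t(r,\beta)\,\chi(r,r')+\sin\beta\;t(r',\beta)\,\chi(r',r)}{\sqrt2\,(\lambda+d(\beta)+d(\alpha))}\, .
\]
Here $t$ is a scalar amplitude, even in $\beta$, to be optimized. The cutoff $\chi(r,r')$ is the indicator of $\{\abs{r'}\geq\abs r+\delta\}$ (or a similar region), so the two symmetrized terms have disjoint supports. This keeps the cross terms harmless and makes symmetry in $(r,r')$ free. Since disjointness and the cutoff are imposed at the level of frequencies, the projections $\Pi_n$ can be dropped in upper bounds by \eqref{eq:contract} and Lemma~\ref{lem:raise}.

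Next I compute the three costs on each fiber $(r,\beta)$. By \eqref{eq:D2b},
\[
(D_2^*k)_{12}(r,\beta)=t(r,\beta)\,\sin^2\beta\; L(r,\beta),
\qquad
L(r,\beta)=\int_{\abs{r'}\geq\abs r+\delta}\frac{\dd m(r')}{\lambda+d(\beta)+d(r+r'-p_2)}\, .
\]
This uses that the other symmetrized term contributes only off the cutoff region, or can be absorbed. The energy $\Hplus k^2$ equals $\tfrac{1}{1}\sin^2\beta\,L\,t^2$, integrated over $(r,\beta)$ and up to the normalizing constants.

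For the quartic term I need a general raising bound $\Hminus{D_3k}^2\leq C\Hplus k^2$. By \eqref{eq:raise}, $D_3k$ is an average of four terms $\sin(P_{j_a})$ times $k$. Cauchy--Schwarz together with $\sin^2(P_{j_a})\leq 2d(P_{j_a})\leq 2(\lambda+\theta(P))$ should give this with a small explicit constant ($C\leq 2$, say), using that $\lambda+\theta$ of the degree-$4$ frequency dominates each $d(P_{j_a})$ coordinatewise. Hence the left side of \eqref{eq:correction-cost} is at most
\[
\int_{\T^2}\Bigl[(1+C)\,t^2\sin^2\beta\,L+\frac{(w-t\sin^2\beta\,L)^2}{\lambda+d(r)+d(\beta)}\Bigr]\dd m(r)\dd m(\beta)\, .
\]
Minimizing pointwise in $t$ gives
\[
\frac{w^2}{\lambda+d(r)+d(\beta)+\sin^2\beta\,L/(1+C)}\, .
\]
I expect the weighted Cauchy--Schwarz to work out so that the effective constant is close to $1/4\pi$ once the normalizations $\sqrt2,2,\sqrt{18}$ are tracked; the stated constant in \eqref{eq:mixed-weight} is what fixes $C$ and the cutoff.

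The remaining and main obstacle is the lower bound
\[
L(r,\beta)\geq\frac{c}{\pi}\log\Bigl(1+\frac{\pi}{\delta+\abs r+\abs\beta}\Bigr)
\]
with the right constant. On the region $\abs{r'}\geq\abs r+\delta$, the hypotheses $\abs{p_2}\leq a\leq\delta$ and $\abs r\leq\abs{r'}$ give $\abs{r+r'-p_2}\leq 3\abs{r'}$ (modulo $2\pi$ care near $\pm\pi$). Then \eqref{eq:sine} gives
\[
\lambda+d(\beta)+d(\alpha)\leq \delta^2+\beta^2/2+9r'^2/2\lesssim(\abs{r'}+\abs\beta+\delta)^2\, ,
\]
and integrating $\dd r'/(2\pi)$ over $\abs r+\delta\leq\abs{r'}\leq\pi$ yields a logarithm of $\pi/(\abs r+\abs\beta+\delta)$. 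Getting the exact $1/4\pi$ and handling the regime where the logarithm is small (where the ``$1+$'' inside the logarithm must be recovered, perhaps by a separate trivial bound) are where I expect the bookkeeping to be delicate. If the constants do not close, I would shrink the cutoff region or rescale $t$, since any positive constant in front of the logarithm suffices downstream.
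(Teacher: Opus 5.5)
Your architecture matches the paper's: $k$ odd in $\beta$ so that \eqref{eq:D2a} kills the $11$ component, a symmetrized ansatz with amplitude $t$, and fiberwise completion of the square. The gap is that the estimate your argument rests on is false: there is no bound $\Hminus{D_3k}^2\leq C\Hplus{k}^2$.

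Raising attaches a line at a fresh frequency, which is then integrated out. For a new horizontal line the factor is $\sin\beta$ and the degree-four weight is $\lambda+d(\beta)+d(\alpha+s)$, so by \eqref{eq:line-integral}
\[
\int_\T\frac{\sin^2\beta\dd m(s)}{\lambda+d(\beta)+d(\alpha+s)}
=\frac{\sin^2\beta}{\sqrt{(\lambda+d(\beta))(\lambda+d(\beta)+2)}}\approx\abs\beta .
\]
A new vertical line gives $\approx\abs\alpha$. Your pointwise inequality $\sin^2(P_{j_a})\leq2(\lambda+\theta(P))$ only yields $\Hminus{D_3k}^2\lesssim\norm{k}^2$. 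The plain $L^2$ norm is not controlled by $\int(\lambda+d(\beta)+d(\alpha))\abs{k}^2$, since the ratio $(\abs\beta+\abs\alpha)/(\beta^2+\alpha^2)$ is unbounded as the frequencies tend to zero. This is the same first-order effect that makes $\Hminus{w}^2$ of order $\int\abs r\varphi(r)^2\dd m(r)$ in degree one, and it is the reason the lemma is needed.

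So the true cost weight on $k$ is linear. The paper majorizes energy plus quartic term by $M=4(\delta+\abs r+\abs{r'}+\abs\beta)$ and uses the profile $1/M$ in $r'$. The logarithm comes from the $1/\abs{r'}$ tail of that profile:
\[
\sin^2\beta\int_\T\frac{\dd m(r')}{4(\delta+\abs r+\abs\beta+\abs{r'})}
=\frac{\sin^2\beta}{4\pi}\log\Bigl(1+\frac\pi{\delta+\abs r+\abs\beta}\Bigr),
\]
which is exactly the extra term in \eqref{eq:mixed-weight}. With cost weight $M$, a profile $g$ yields the gain $\sin^2\beta\,(\int g\dd m)^2/\int Mg^2\dd m$, which is maximized by $g\propto1/M$. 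Your profile $1/(\lambda+d(\beta)+d(\alpha))$ decays like $r'^{-2}$ and gives only $O(\sin^2\beta)$, nothing beyond $d(\beta)$. Relatedly, your $L$ is not logarithmic: integrating $(\abs{r'}+\abs\beta+\delta)^{-2}$ gives order $(\delta+\abs r+\abs\beta)^{-1}$. That gain is too large precisely because the quartic cost was dropped.

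There is a second gap. $(D_2^*k)_{12}$ enters $\Hminus{w-(D_2^*k)_{12}}^2$ exactly, so it cannot be handled by absorbing terms or by dropping projections through \eqref{eq:contract}. Your weight depends on $\alpha=r+r'-p_2$, so the second symmetrized term contributes $\sin^2\beta\int_{\abs{r'}\leq\abs r-\delta}t(r',\beta)\,[\lambda+d(\beta)+d(\alpha)]^{-1}\dd m(r')$ at the fiber $(r,\beta)$. This couples different fibers and breaks the pointwise minimization in $t$. Moreover, $k$ must lie in $\Hs_3$. Replacing your $K$ by $\Pi_3K$ changes the lowering by a multiple of $\sin\beta\int_\T\int_\T K\dd m(r)\dd m(r')$, which does not vanish in general because $p_2$ breaks the symmetry $(r,r')\mapsto(-r,-r')$.

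The paper avoids both problems by using a weight even in each of $r,r',\beta$ separately, together with $t=w/W$ odd in $r$. The cross term then vanishes by \eqref{eq:mixed-cancellation}. The projection correction vanishes because $K$ is odd under $(r,r')\mapsto(-r,-r')$.
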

	
	\begin{proof}
		\noindent\emph{Step 1.}
		To preserve parity in the minimization, we bound
		$\Hplus k^2+\Hminus{D_3k}^2$ by a weighted squared norm of its Fourier
		coefficient, with a weight even in each frequency.
		Set
		\begin{equation}\label{eq:M}
			M(r,r',\beta)\coloneqq
			4\bigl(\delta+\abs r+\abs{r'}+\abs\beta\bigr)\, .
		\end{equation}
		For every $K\in L^2(\T^3)$ symmetric in $r,r'$, we have
		\begin{equation}\label{eq:majorant}
			\Hplus{\Pi_3K}^2+\Hminus{D_3\Pi_3K}^2
			\leq\int_{\T^3}M\abs K^2
			\dd m(r)\dd m(r')\dd m(\beta)\, .
		\end{equation}
		To integrate over the frequency of a newly added line index, use
		\begin{equation}\label{eq:line-integral}
			\int_\T\frac{\dd m(t)}{\mu+d(t)}
			=\frac1{\sqrt{\mu(\mu+2)}}\leq\frac1{\sqrt{2\mu}}
			\qquad(\mu>0)\, .
		\end{equation}
		The substitution $u=\tan(t/2)$ gives the equality.
		By \eqref{eq:iso}, the normalization factors for subscripts $1112$, $1122$,
		and $112$ are $\sqrt{96}$, $12$, and $\sqrt{18}$.
		Equation~\eqref{eq:raise} therefore gives $i\sin\beta/\sqrt3$ times
		three summands for component $1112$, and $i\sin\alpha/\sqrt2$ times
		two summands for component $1122$.
		Drop the degree-four projection $\Pi_4$ using \eqref{eq:contract}, bound each squared
		sum by its number of terms times the sum of squares, and apply
		\eqref{eq:line-integral}.  The result is
		\begin{align*}
			\Hplus{\Pi_3K}^2+\Hminus{D_3\Pi_3K}^2
			\leq\int_{\T^3}\Biggl[&\lambda+d(\beta)+d(\alpha)
			+\frac{3\sin^2\beta}
			{\sqrt{(\lambda+d(\beta))(\lambda+d(\beta)+2)}}\\
			&+\frac{2\sin^2\alpha}
			{\sqrt{(\lambda+d(\alpha))(\lambda+d(\alpha)+2)}}\Biggr]
			\abs{\Pi_3K}^2\dd m(r)\dd m(r')\dd m(\beta)\, .
		\end{align*}
		The bracket is a positive continuous function of the total frequency
		$(\beta,\alpha)$, so Lemma~\ref{lem:raise} and \eqref{eq:contract}
		allow $\Pi_3K$ to be replaced by $K$ in this integral.
		The inequalities $d(t)\leq\abs t$ and
		$\sin^2t\leq\abs t\sqrt{2d(t)}$ bound the bracket by
		$\lambda+4\abs\beta+3\abs\alpha\leq M$, since
		$\lambda\leq\sqrt\lambda$ and $\abs\alpha\leq\abs r+\abs{r'}+a$.
		This proves \eqref{eq:majorant}; removing the projection before comparing
		with $M$ avoids requiring $\Pi_3$ to commute with $M$.
		
		\medskip
		\noindent\emph{Step 2.}
		Write $W=B+\sigma$, where
		\begin{align}
			B(r,\beta)&\coloneqq\lambda+d(r)+d(\beta),\label{eq:B}\\
			\sigma(r,\beta)&\coloneqq W(r,\beta)-B(r,\beta)
			=\sin^2\beta\int_\T\frac{\dd m(r')}{M(r,r',\beta)}\, .\label{eq:sigma}
		\end{align}
		Integrating $1/(\delta+\abs r+\abs\beta+\abs{r'})$ over $r'$ gives
		the logarithm in \eqref{eq:mixed-weight}.
		For every real square-integrable $t(r,\beta)$ odd in $r$ and even in
		$\beta$, the construction below gives a symmetric coefficient $K$ with
		$(\widetilde D_2^*K)_{12}=\sigma t$ and
		$\int_{\T^3}M\abs K^2\dd m=\int_{\T^2}\sigma t^2\dd m$.
		Thus the minimization reduces to completing the square at each
		$(r,\beta)$:
		\begin{equation}\label{eq:scalar-energy}
			\sigma t^2+\frac{(w-\sigma t)^2}{B}
			=\frac{w^2}{W}+\frac{\sigma W}{B}
			\left(t-\frac wW\right)^2
			\qquad(t\in\R)\, .
		\end{equation}
		Taking $t=w/W$, define the symmetric coefficient
		\begin{equation}\label{eq:K}
			K(r,r',\beta)\coloneqq
			\frac{i\sin\beta}{\sqrt2 M(r,r',\beta)}
			\left[\frac{w(r)}{W(r,\beta)}+
			\frac{w(r')}{W(r',\beta)}\right]\, .
		\end{equation}
		Since $W\geq\lambda$ and $M\geq4\delta$, we have $K\in L^2(\T^3)$.
		The function $w(r)/W(r,\beta)$ is odd in $r$ and even in $\beta$, while
		$M$ is even in each variable.  Hence
		\begin{equation}\label{eq:mixed-cancellation}
			\int_\T\frac{w(r')}{M(r,r',\beta)W(r',\beta)}\dd m(r')=0,
			\qquad (\widetilde D_2^*K)_{12}=\frac{\sigma w}{W}\, .
		\end{equation}
		
		In the expansion of $M\abs K^2$, the cross term integrates to zero by
		\eqref{eq:mixed-cancellation}.  The two square terms have equal integrals
		by symmetry, so
		\begin{equation}\label{eq:parity-energy}
			\int_{\T^3}M\abs K^2\dd m(r)\dd m(r')\dd m(\beta)
			=\int_{\T^2}\frac{\sigma w^2}{W^2}\dd m(r)\dd m(\beta)\, .
		\end{equation}
		The same calculations apply with any real square-integrable $t(r,\beta)$
		odd in $r$ and even in $\beta$ in place of $w/W$.
		These identities give the quadratic expression in \eqref{eq:scalar-energy}.
		
		\medskip
		\noindent\emph{Step 3.}
		Set $k=\Pi_3K\in\Hs_3$ to remove coefficients with coincident row indices.
		The removed part has Fourier transform
		\[
		((I-\Pi_3)K)(r,r',\beta)
		=\int_\T K(t,r+r'-t,\beta)\dd m(t)\, .
		\]
		Its integral in $r'$ equals the integral of $K$ over both row variables.
		This double integral vanishes because $K$ is odd under
		$(r,r')\mapsto(-r,-r')$.  Thus
		\begin{equation}\label{eq:projected-lowering}
			\int_\T((I-\Pi_3)K)(r,r',\beta)\dd m(r')=0,
			\qquad (D_2^*k)_{12}=\frac{\sigma w}{W}\, .
		\end{equation}
		Also, $K$ is odd in $\beta$, and $\Pi_3$ acts only on row indices.
		Equation~\eqref{eq:D2a} gives $(D_2^*k)_{11}=0$.
		
		Finally, $w-\sigma w/W=Bw/W$.
		Equations~\eqref{eq:majorant}, \eqref{eq:parity-energy}, and
		\eqref{eq:projected-lowering} therefore bound the left side of
		\eqref{eq:correction-cost} by
		\[
		\int_{\T^2}\frac{(\sigma+B)w^2}{W^2}\dd m(r)\dd m(\beta)
		=\int_{\T^2}\frac{w^2}{W}\dd m(r)\dd m(\beta)\, .
		\]
	\end{proof}
	
	\section{Estimating the quadratic functional}\label{sec:effective-energy}
	
	We integrate out the column frequency to reduce the variational bound
	to a quadratic functional of $\varphi$ alone.
	
	Restrict the row frequencies to
	\begin{equation}\label{eq:Gamma-q}
		\Gamma_\delta\coloneqq
		\{r\in\T:\sqrt\delta\leq\abs r\leq1/4\}
		\qquad(0<\delta<1/256)\, .
	\end{equation}
	The cutoff gives $\delta\leq r^2$, so the degree-one contribution can
	be bounded using the weight
	\begin{equation}\label{eq:q}
		q(r)\coloneqq\frac{\abs r}{\sqrt{\log(1/\abs r)}}
		\qquad(0<\abs r\leq1/4)\, .
	\end{equation}
	
	\begin{lemma}\label{lem:effective-energy}
		For every
		$\lambda\in(0,1]$, $p\in\T^2$ with
		$\abs{p_2}\leq\abs{p_1}$ and
		$\delta=\sqrt\lambda+\abs{p_1}<1/256$, and every real even
		$\varphi\in L^2(\T,m)$ vanishing off $\Gamma_\delta$,
		\begin{equation}\label{eq:effective-energy}
			r_\lambda(p)\leq
			\frac{\left(1-s\int_{\Gamma_\delta}\varphi\dd m\right)^2}{H_0}
			+16\int_{\Gamma_\delta}q(r)\varphi(r)^2\dd m(r)\, .
		\end{equation}
	\end{lemma}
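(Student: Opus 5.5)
We apply Lemma~\ref{lem:variational-bound} with $g=f+k$, where $f$ is given by \eqref{eq:f} and $k$ is the degree-three function supplied by Lemma~\ref{lem:parity}. The vector $1-A_pg$ splits into orthogonal Walsh components. Since $H$ preserves each degree, the quantity $\Hminus{1-A_pg}^2$ splits as well. We estimate the pieces separately.

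\textbf{Degree zero.} The degree-zero component of $1-A_pg$ is $1+D_0^*f$. By \eqref{eq:degree-one-images} this equals $1-s\int\varphi\dd m$. Because $H_0$ acts as a scalar, this piece contributes $(1-s\int_{\Gamma_\delta}\varphi\dd m)^2/H_0$, which is the first term of \eqref{eq:effective-energy}. The remaining terms form exactly $E_p(f,k)$ in \eqref{eq:E}.

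\textbf{Degree two.} The degree-two part of $E_p(f,k)$ has components $11$ and $12$. Lemma~\ref{lem:parity} gives $(D_2^*k)_{11}=0$. To handle the projection $\Pi_2$ in the $11$ component, note that $H_2^{-1}$ is multiplication by a function of $P$. We therefore drop $\Pi_2$ using \eqref{eq:contract}. By \eqref{eq:D1} the $11$ term is then at most
\[
\int_{\T^2}\frac{s^2[\varphi(r)+\varphi(r')]^2}{2\bigl(\lambda+d(p_1)+d(\alpha)\bigr)}\dd m(r)\dd m(r').
\]
Bounding $[\varphi(r)+\varphi(r')]^2\leq2\varphi(r)^2+2\varphi(r')^2$ and integrating out $r'$ with \eqref{eq:line-integral} gives at most $s^2\int\varphi^2\dd m/\sqrt{2(\lambda+d(p_1))}$. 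Since $s^2\leq a^2$ and $d(p_1)\geq 2a^2/\pi^2$, this is $O(a)\int\varphi^2$. On $\Gamma_\delta$ we have $a\leq\delta\leq r^2\leq q(r)$, because $r^2\sqrt{\log(1/|r|)}\leq|r|$. So the $11$ term costs a bounded multiple of $\int q\varphi^2$.

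The $12$ component, together with $\Hplus k^2+\Hminus{D_3k}^2$, is bounded by Lemma~\ref{lem:parity}. This leaves $\int w^2/W$.

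\textbf{Degree one.} The energy $\ip f{H_1f}$ equals $\int(\lambda+d(p_1)+d(r))\varphi(r)^2\dd m$. Since $\lambda\leq\delta^2$ and $d(p_1)\leq a^2/2$, both are $\leq r^2$ on $\Gamma_\delta$, and $d(r)\leq r^2/2$. Hence this term is at most $2\int r^2\varphi^2\leq2\int q\varphi^2$.

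\textbf{The main estimate.} The key step is bounding $\int_{\T^2}w^2/W$. Here $w^2=\sin^2(r)\varphi(r)^2\leq r^2\varphi(r)^2$, so it suffices to show, for $r\in\Gamma_\delta$,
\[
\int_\T\frac{\dd m(\beta)}{W(r,\beta)}\lesssim\frac{1}{|r|\sqrt{\log(1/|r|)}}.
\]
We split the $\beta$-integral into two ranges.

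For $|\beta|\leq|r|$, use $W\geq d(r)\gtrsim r^2$. This range then contributes $O(1/|r|)$, which is within the target.

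For $|r|\leq|\beta|\leq\pi$, use the logarithmic term of \eqref{eq:mixed-weight}. Here $\delta\leq r^2\leq|\beta|$, so $\delta+|r|+|\beta|\leq3|\beta|$ and the logarithm is $\gtrsim\log(1/|\beta|)$ for small $|\beta|$. With $\sin^2\beta\gtrsim\beta^2$ we get
\[
W\gtrsim\beta^2\bigl(1+\log(1/|\beta|)\bigr).
\]
Substituting $\beta=e^{-u}$, the integral of $\beta^{-2}\log(1/|\beta|)^{-1}$ over $|r|\leq|\beta|\leq1/4$ is bounded by $\int_{\log 4}^{\log(1/|r|)}e^{u}u^{-1}\dd u\lesssim1/(|r|\log(1/|r|))$. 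This is smaller than needed, but still gives the required bound. The region $|\beta|>1/4$ contributes $O(1)$, since there $W\geq d(\beta)$ is bounded below.

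Combining the two ranges gives $\int_{\T^2}w^2/W\lesssim\int r^2\varphi^2/(|r|\sqrt{\log})=\int q\varphi^2$.

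\textbf{Constants.} The main obstacle is the bookkeeping needed to keep the total constant at $16$. Most of the budget goes to the $|\beta|\leq|r|$ region, where the factor $\pi^2/2$ from \eqref{eq:sine} enters. The $11$ term and the $f$-energy take only small shares.
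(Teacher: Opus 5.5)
Your reduction matches the paper's proof: the degree-zero term, the split into $E_p(f,k)$, the degree-one energy, dropping $\Pi_2$ in the $11$ component, and the appeal to Lemma~\ref{lem:parity}. The gap is in your main estimate, on the range $\abs\beta\leq\abs r$. There you use only $W\geq d(r)\geq2r^2/\pi^2$. This gives a contribution of $\frac{\abs r}{\pi}\cdot\frac{\pi^2}{2r^2}=\frac{\pi}{2\abs r}$, which you say is within the target. It is not: $1/\abs r$ exceeds $1/(\abs r\sqrt{\log(1/\abs r)})$ by the unbounded factor $\sqrt{\log(1/\abs r)}$. So your argument only shows $\int_\T W^{-1}\dd m(\beta)\lesssim1/\abs r$, hence $E_p(f,k)\lesssim\int\abs r\varphi^2\dd m$. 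That has weight $\abs r$ instead of $q$ and does not prove the stated inequality. The loss is also fatal downstream. With weight $\abs r$, Section~\ref{sec:fixed-frequency} would give $Z_\delta\asymp\log(1/\delta)$ instead of $[\log(1/\delta)]^{3/2}$, so $r_\lambda(p)\lesssim1/(a^2\log(1/a))$. The frequency integral of that bound, $\int_0^{1/4}\dd a/(a\log(1/a))$, diverges.

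The missing observation is that the logarithmic term of $W$ already matters deep inside $\abs\beta\leq\abs r$. The term $\beta^2\log(1/\abs r)$ overtakes $r^2$once $\abs\beta\gtrsim\abs r/\sqrt{\log(1/\abs r)}$, and the $\beta$-integral is concentrated on exactly that window. So you must keep both terms there.

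The paper does this with $\rho=\abs r$, splitting at $\abs\beta=\sqrt\rho$. On the inner range, $\delta+\rho+\abs\beta\leq3\sqrt\rho$ and $\abs\beta\leq1/2$. Hence the logarithm in \eqref{eq:mixed-weight} is at least $\frac12\log(1/\rho)$, and $W\geq2\rho^2/\pi^2+\beta^2\log(1/\rho)/(2\pi^3)$. Integrating this Lorentzian over all of $\R$ gives $\pi^{5/2}/(2\rho\sqrt{\log(1/\rho)})$. On $\abs\beta>\sqrt\rho$, the crude bound $W\geq d(\beta)$ gives $\pi/(2\sqrt\rho)$, which is within the target since $\rho\log(1/\rho)\leq1$. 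This produces the constant $11$, and $5+11=16$. Your estimate on $\abs r\leq\abs\beta\leq1/4$ is correct and could be kept with a split at $\abs r$, provided the inner range uses the Lorentzian bound.

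Since the lemma states the explicit constant $16$, the constants must actually be tracked rather than left as $O(\cdot)$. Your $11$ bound is also off by a factor $2$. The correct bound is $2s^2\int\varphi^2\dd m/\sqrt{\mu(\mu+2)}$ with $\mu=\lambda+d(p_1)$. Using $s^2\leq2\mu$ and $\mu\leq\delta^2$, this is still at most $2\sqrt2\,\delta\int\varphi^2\dd m$.
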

	
	For each fixed $\delta$, the weight $q$ is bounded above and away from
	zero on $\Gamma_\delta$.  Thus the lemma covers every real even measurable
	$\varphi$ supported there with finite weighted square integral.
	
	\begin{proof}
		Take $f$ and $k$ as in Lemma~\ref{lem:parity}.
		By \eqref{eq:DA}, \eqref{eq:E}, and \eqref{eq:degree-one-images},
		orthogonality of the degrees gives
		\begin{equation}\label{eq:uncancelled}
			\Hplus{f+k}^2+\Hminus{1-A_p(f+k)}^2
			=\frac{\left(1-s\int_\T\varphi\dd m\right)^2}{H_0}+E_p(f,k)\, .
		\end{equation}
		Set $\mu=\lambda+d(p_1)$.  Apply
		$\abs{f(r)+f(r')}^2\leq2\abs{f(r)}^2+2\abs{f(r')}^2$ in
		\eqref{eq:D1}, remove the projection using \eqref{eq:contract}, and
		integrate with \eqref{eq:line-integral}.  Together with \eqref{eq:Hsym},
		this gives
		\begin{equation}\label{eq:degree-one-cost}
			\begin{split}
				\Hplus f^2+\Hminus{(D_1f)_{11}}^2
				&\leq\int_\T\left[\mu+d(r)+
				\frac{2s^2}{\sqrt{\mu(\mu+2)}}\right]\varphi(r)^2\dd m(r)\\
				&\leq\int_\T\left(\delta^2+2\sqrt2\delta+\frac{r^2}{2}\right)
				\varphi(r)^2\dd m(r)\, .
			\end{split}
		\end{equation}
		Here we used $\mu\leq\delta^2$ and $s^2\leq2\mu$.
		On $\Gamma_\delta$, the last weight is at most $5r^2\leq5q(r)$,
		since $\delta\leq r^2$.  Lemma~\ref{lem:parity} therefore gives
		\begin{equation}\label{eq:energy-reduction}
			E_p(f,k)\leq5\int_{\Gamma_\delta}q(r)\varphi(r)^2\dd m(r)
			+\int_{\T^2}\frac{\sin^2r\varphi(r)^2}{W(r,\beta)}
			\dd m(r)\dd m(\beta)\, .
		\end{equation}
		It remains to bound the integral of $W^{-1}$ in $\beta$.
		Fix $r\in\Gamma_\delta$, let $\rho\coloneqq\abs r$, and split this integral at
		$\abs\beta=\sqrt\rho$.
		On the inner interval, $\delta\leq\rho^2$ gives
		$\delta+\rho+\abs\beta\leq3\sqrt\rho$ and
		$\abs\beta\leq1/2$.  Therefore
		\[
		\log\left(1+\frac\pi{\delta+\rho+\abs\beta}\right)
		\geq\frac12\log(1/\rho),\qquad
		W(r,\beta)\geq\frac{2\rho^2}{\pi^2}
		+\frac{\beta^2\log(1/\rho)}{2\pi^3}\, .
		\]
		The second inequality follows from \eqref{eq:mixed-weight},
		\eqref{eq:sine}, and $\abs{\sin\beta}\geq2\abs\beta/\pi$
		for $\abs\beta\leq\pi/2$.
		On the outer interval, $W\geq d(\beta)\geq2\beta^2/\pi^2$.
		Hence
		\begin{equation}\label{eq:beta}
			\begin{split}
				\int_\T\frac{\dd m(\beta)}{W(r,\beta)}
				&\leq\frac1{2\pi}\int_\R
				\frac{\dd\beta}{2\rho^2/\pi^2+
					\beta^2\log(1/\rho)/(2\pi^3)}
				+\frac\pi2\int_{\sqrt\rho}^\pi\frac{\dd\beta}{\beta^2}\\
				&\leq\frac{\pi^{5/2}}{2\rho\sqrt{\log(1/\rho)}}
				+\frac\pi{2\sqrt\rho}
				\leq\frac{11}{\rho\sqrt{\log(1/\rho)}}\, .
			\end{split}
		\end{equation}
		The last inequality uses $\rho\log(1/\rho)\leq1$ and
		$(\pi^{5/2}+\pi)/2<11$.
		Since $\sin^2r\leq r^2$, equations~\eqref{eq:energy-reduction} and
		\eqref{eq:beta} give $E_p(f,k)\leq16\int_{\Gamma_\delta}q\varphi^2\dd m$.
		Substitution in \eqref{eq:uncancelled} and Lemma~\ref{lem:variational-bound}
		prove the result.
	\end{proof}
	
	\section{The frequency bound}\label{sec:fixed-frequency}
	
	The first term on the right side of \eqref{eq:effective-energy} depends only on
	$\int_{\Gamma_\delta}\varphi\dd m$.
	For a fixed value of this integral, Cauchy--Schwarz shows that
	$\int_{\Gamma_\delta}q\varphi^2\dd m$ is minimized when $\varphi$ is proportional to
	$q^{-1}$ on $\Gamma_\delta$.  This leaves one scalar to optimize.  Set
	\begin{equation}\label{eq:Z}
		\begin{split}
			Z_\delta\coloneqq\int_{\Gamma_\delta}\frac{\dd m(r)}{q(r)}
			&=\frac{2}{3\pi}\left[
			\left(\tfrac12\log(1/\delta)\right)^{3/2}
			-(\log4)^{3/2}\right]\\
			&\asymp[\log(1/\delta)]^{3/2}
			\qquad(0<\delta<1/256)\, .
		\end{split}
	\end{equation}
	The equality follows by substituting $u=\log(1/r)$.
	Choose $\varphi(r)=t/q(r)$ on $\Gamma_\delta$ and zero elsewhere,
	where $t\in\R$.  This function belongs to $L^2(\T,m)$ because
	$\Gamma_\delta$ stays away from zero.  Then
	$\int\varphi\dd m=tZ_\delta$ and
	$\int q\varphi^2\dd m=t^2Z_\delta$.
	Substituting in \eqref{eq:effective-energy} and completing the square gives
	\begin{equation}\label{eq:scalar-minimum}
		\inf_{t\in\R}\left\{\frac{(1-stZ_\delta)^2}{H_0}
		+16t^2Z_\delta\right\}
		=\frac1{H_0+s^2Z_\delta/16},\qquad
		t=\frac{s}{16H_0+s^2Z_\delta}\, .
	\end{equation}
	Thus
	\begin{equation}\label{eq:optimized}
		r_\lambda(p)\leq\frac1{H_0+s^2Z_\delta/16}\, .
	\end{equation}
	
	\begin{proposition}\label{prop:frequency}
		For every
		$\lambda\in(0,1]$ and $p\in\T^2$, writing
		$a(p)=\max\{\abs{p_1},\abs{p_2}\}$, we have
		\begin{equation}\label{eq:frequency-bound}
			r_\lambda(p)\leq
			\frac{2048}{\lambda+a(p)^2
				[1+\log_+(1/(\sqrt\lambda+a(p)))]^{3/2}}\, .
		\end{equation}
	\end{proposition}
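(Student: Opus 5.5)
The plan is to combine the optimized bound \eqref{eq:optimized} in the regime where it applies with the trivial bound \eqref{eq:zero-function} elsewhere, and absorb all constants into $2048$. By the symmetry exchanging rows and columns we may assume $a=a(p)=\abs{p_1}\geq\abs{p_2}$. Write $\delta=\sqrt\lambda+a$ and $L\coloneqq1+\log_+(1/\delta)$. The target is then $r_\lambda(p)\leq 2048/(\lambda+a^2L^{3/2})$.

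\emph{Case $\delta\geq\delta_0$ for a fixed small threshold}, say $\delta_0=1/256^{2}$ or any value below $1/256$ for which $Z_\delta$ is comparable to $[\log(1/\delta)]^{3/2}$ for $\delta<\delta_0$. Here $L\leq1+\log(1/\delta_0)$ is bounded by an absolute constant $C_0$. Use \eqref{eq:zero-function} together with $\theta(p)\geq d(p_1)\geq2a^2/\pi^2$ from \eqref{eq:sine}. This gives $r_\lambda(p)\leq1/(\lambda+2a^2/\pi^2)\leq(\pi^2/2)C_0^{3/2}/(\lambda+a^2L^{3/2})$. One must check that $(\pi^2/2)C_0^{3/2}\leq2048$. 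With $\delta_0=1/256^2$ we have $\log(1/\delta_0)\approx11.1$, so $C_0^{3/2}\approx41$ and the product is about $200$, which is fine.

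\emph{Case $\delta<\delta_0$.} Now Lemma~\ref{lem:effective-energy} applies, so \eqref{eq:optimized} holds: $r_\lambda(p)\leq1/(H_0+s^2Z_\delta/16)$. We have $\abs{p_1}=a\leq\delta<1/4$, so $s^2=\sin^2p_1\geq(2a/\pi)^2$. We also have $H_0\geq\lambda$ and $H_0\geq 2a^2/\pi^2$. For $Z_\delta$, use the exact formula \eqref{eq:Z}. Since $\delta<\delta_0$ gives $\tfrac12\log(1/\delta)\geq c\log(1/\delta)$ with $(\log4)^{3/2}$ at most a fixed fraction of $(\tfrac12\log(1/\delta))^{3/2}$, we get $Z_\delta\geq c_1[\log(1/\delta)]^{3/2}\geq c_1 2^{-3/2}L^{3/2}$, using $L\leq2\log(1/\delta)$. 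Combining these,
\[
H_0+\frac{s^2Z_\delta}{16}\geq\tfrac12\lambda+\tfrac12\cdot\frac{2a^2}{\pi^2}\cdot(\ldots)+\frac{4a^2}{16\pi^2}c_1 2^{-3/2}L^{3/2}\geq c_2\bigl(\lambda+a^2L^{3/2}\bigr).
\]
Inverting gives the claim with constant $1/c_2$.

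The main obstacle is purely numerical: verifying that the explicit constants coming from \eqref{eq:Z}, the factor $16$, and the sine bounds combine to at most $2048$. The key quantity is a lower bound $Z_\delta\geq c_1[\log(1/\delta)]^{3/2}$ with an explicit $c_1$. The prefactor $\tfrac{2}{3\pi}2^{-3/2}\approx0.075$ is small, and this is why the threshold $\delta_0$ must be taken small enough that the subtracted $(\log4)^{3/2}$ is negligible. The price is that the constant in the first case grows like $[\log(1/\delta_0)]^{3/2}$. I would first try $\delta_0=1/256^2$ and compute both constants. If the second case exceeds $2048$, I would move the threshold to balance the two cases.
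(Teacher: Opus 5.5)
Your proposal is correct and follows the paper's proof. It uses \eqref{eq:optimized} with $s^2\geq4a^2/\pi^2$ and $Z_\delta\gtrsim[\log(1/\delta)]^{3/2}$ for small $\delta$, and \eqref{eq:zero-function} with $\theta(p)\gtrsim a^2$ and bounded $L$ otherwise. The only difference is the threshold, and you do not need a smaller one: at the paper's $1/256$ the subtracted $(\log4)^{3/2}$ is already at most $2^{-3/2}$ of the main term, which gives $Z_\delta\geq[\log(1/\delta)]^{3/2}/32$. With your $\delta_0=2^{-16}$, the arithmetic you left undone gives $Z_\delta\geq\frac{7}{24\sqrt2\pi}[\log(1/\delta)]^{3/2}$ and a second-case constant of $384\pi^3/7\approx1701<2048$, so no rebalancing is needed.
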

	
	\begin{proof}
		Set $\delta=\sqrt\lambda+a(p)$.
		For $\delta<1/256$, exchange rows and columns if needed.
		Equation~\eqref{eq:Z} gives $Z_\delta\geq[\log(1/\delta)]^{3/2}/32$.
		Since $s^2\geq4a(p)^2/\pi^2$ and $\log(1/\delta)>5$,
		\[
		H_0+\frac{s^2Z_\delta}{16}
		\geq\lambda+\frac{a(p)^2[\log(1/\delta)]^{3/2}}{128\pi^2}
		\geq\frac{\lambda+a(p)^2[1+\log(1/\delta)]^{3/2}}{2048}\, .
		\]
		Thus \eqref{eq:optimized} gives \eqref{eq:frequency-bound}.
		For $\delta\geq1/256$, use \eqref{eq:zero-function},
		$\theta(p)\geq a(p)^2/5$, and
		$[1+\log_+(1/\delta)]^{3/2}<19$.
	\end{proof}
	
	\begin{proof}[Proof of Theorem~\ref{thm:annealed}]
		The square $\{a(p)\leq b\}$ has normalized measure $b^2/\pi^2$ for
		$0\leq b\leq\pi$.  On $a(p)\leq\sqrt\lambda$, the bound
		$r_\lambda\leq1/\lambda$ therefore contributes at most $1/\pi^2$.
		On $\sqrt\lambda<a(p)<1/4$,
		$1+\log_+(1/(\sqrt\lambda+a(p)))\geq[1+\log(1/a(p))]/2$.
		On $a(p)\geq1/4$, use $r_\lambda(p)\leq\pi^2/(2a(p)^2)$.
		Integrating these three bounds using Proposition~\ref{prop:frequency} gives
		\begin{align*}
			\int_{\T^2}r_\lambda(p)\dd m(p)
			&\leq\frac1{\pi^2}
			+\frac{2^{5/2}\cdot2048}{\pi^2}
			\int_0^{1/4}\frac{\dd a}{a[1+\log(1/a)]^{3/2}}+\log(4\pi)\\
			&=\frac1{\pi^2}
			+\frac{2^{7/2}\cdot2048}{\pi^2\sqrt{1+\log4}}+\log(4\pi)
			<2048\, .
		\end{align*}
		Here we integrated using $u=1+\log(1/a)$.
		For $\sqrt\lambda>1/4$, omit the middle integral; the other two bounds
		still cover all frequencies.  Equation~\eqref{eq:green} and monotone convergence
		as $\lambda\downarrow0$ prove the explicit Green bound
		\begin{equation}\label{eq:green-explicit}
			\int_0^\infty\overline p_t(0,0)\dd t\leq2048\, .
		\end{equation}
	\end{proof}
	
	\begin{proof}[Proof of Theorem~\ref{thm:main}]
		Tonelli's theorem gives
		$\int_0^\infty p_t^\omega(0,0)\dd t<\infty$ almost surely.
		The continuous-time walk is the discrete-time walk run with an independent
		rate-two Poisson clock, so another application of Tonelli gives
		\[
		\int_0^\infty p_t^\omega(0,0)\dd t
		=\sum_{n\geq0}p_n^\omega(0,0)
		\int_0^\infty e^{-2t}\frac{(2t)^n}{n!}\dd t
		=\frac12\sum_{n\geq0}p_n^\omega(0,0)\, .
		\]
		Finally, $p_n^\omega(x,x)=p_n^{\tau_x\omega}(0,0)$ and $\P$ is
		translation invariant.  Intersecting the probability-one events over
		$x\in\Z^2$ proves the theorem.
	\end{proof}
	
	\subsection{Return probabilities in time}\label{ssec:time}
	
	\begin{proposition}\label{prop:time}
		For every deterministic Manhattan orientation $\omega$, all $x,y\in\Z^2$,
		and $t\geq0$,
		\begin{equation}\label{eq:heat-kernel}
			p_t^\omega(x,y)\leq\frac2{t+2}\, .
		\end{equation}
		For $f(t)\coloneqq\overline p_t(0,0)$ and every $t>0$,
		\begin{equation}\label{eq:time-derivative}
			\abs{f'(t)}\leq\frac2{\sqrt t(1+t/4)}\leq8t^{-3/2}\, .
		\end{equation}
	\end{proposition}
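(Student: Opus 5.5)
The plan is to treat the two assertions separately. Both rest on one structural fact: for a deterministic Manhattan orientation every site has exactly two incoming and two outgoing edges. The generator $\mathcal G_\omega$ of \eqref{eq:discrete-drift} is therefore doubly stochastic, and its adjoint on $\ell^2(\Z^2)$ is the generator of the Manhattan walk with all orientations reversed. As noted after \eqref{eq:discrete-drift}, the symmetric part of $\mathcal G_\omega$ is the simple-random-walk generator and the drift part is skew-adjoint.

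\emph{Heat kernel bound.} Fix $x$ and let $u_t=p_t^\omega(x,\cdot)$. The skew part drops out of the energy identity, so
\[
\tfrac{d}{dt}\norm{u_t}^2=-2\mathcal E(u_t),\qquad \mathcal E(u)=\tfrac12\sum_{z}\sum_{i=1}^2\bigl(u(z+e_i)-u(z)\bigr)^2,
\]
and $\norm{u_t}_1=1$ for all $t$ by double stochasticity. Note that $\mathcal E$ is exactly the Dirichlet form of simple random walk. A two-dimensional Nash inequality $\norm u_2^4\leq C\,\mathcal E(u)\norm u_1^2$ then yields the differential inequality $\tfrac{d}{dt}\norm{u_t}^{-2}\geq 2/C$, hence $\norm{u_t}_2^2\leq 1/(1+2t/C)$. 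The same argument applies to the reversed walk $v_t=p_t^\omega(\cdot,y)$, since it is again a Manhattan walk. By Chapman--Kolmogorov and Cauchy--Schwarz,
\[
p_t^\omega(x,y)=\ip{u_{t/2}}{v_{t/2}}\leq\norm{u_{t/2}}\,\norm{v_{t/2}}.
\]
The main obstacle here is the exact constant $2/(t+2)$. I would first try to get it from a one-dimensional Nash/Sobolev inequality applied line by line: on each line, $\norm v_\infty^2\leq\norm v_1\cdot(\text{gradient})$. Summing over lines should give a clean explicit constant, and I would then tune so that the initial condition $\norm{u_0}=1$ produces exactly $2/(t+2)$.

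\emph{Derivative bound.} I would work at fixed frequency as in Proposition~\ref{prop:generator}, with $f(t)=\int_{\T^2}\ip1{e^{tG_p}1}\dd m(p)$. Writing $e^{tG_p}=e^{(t/2)G_p}e^{(t/2)G_p}$ gives
\[
f'(t)=\int_{\T^2}\ip{G_p^*w_{t/2}}{v_{t/2}}\dd m(p),\qquad v_s=e^{sG_p}1,\quad w_s=e^{sG_p^*}1.
\]
The key operator inequality comes from spectral calculus of the commuting unitary shifts: since $\abs{e^{i\phi}-e^{-i\phi}}^2=4\sin^2\phi\leq8(1-\cos\phi)$, it gives $\norm{A_pw}^2\leq C\ip w{-S_pw}$. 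Together with $\norm{S_p}\leq4$ this gives $\norm{G_p^*w}^2\leq C\ip w{-S_pw}$. Cauchy--Schwarz in $L^2(\P\otimes m)$ then bounds $\abs{f'(t)}$ by the product of $\bigl(\int\ip{w_{t/2}}{-S_pw_{t/2}}\bigr)^{1/2}$ and $\bigl(\int\norm{v_{t/2}}^2\bigr)^{1/2}$.

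Each factor is controlled by Plancherel from the previous step. First, $\int\norm{v_s}^2\dd m=\E\norm{u_s}^2\leq 2/(s+2)\cdot$const. Second, the energy identity $\tfrac{d}{ds}\norm{w_s}^2=-2\ip{w_s}{-S_pw_s}$ integrated over $[t/4,t/2]$ controls the energy at some intermediate time by $\norm{w_{t/4}}^2/t$. To make this legitimate I would shift the splitting point and use monotonicity of $s\mapsto\norm{w_s}$, which gives a $t^{-1/2}$ factor. Combining the $t^{-1/2}$ and $(1+t/4)^{-1}$ factors yields the claimed form, and the final inequality $\leq8t^{-3/2}$ follows from $1+t/4\geq t/4$. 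The delicate point is that the energy is not monotone in this nonreversible setting. I would therefore handle it by averaging the splitting time over an interval, rather than evaluating at a fixed time.
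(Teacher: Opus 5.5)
There is a gap. The statement is quantitative, and your sketch establishes neither constant. Your heat-kernel argument is otherwise the paper's: energy identity, conservation of $\ell^1$ mass, a Nash inequality, then Chapman--Kolmogorov and Cauchy--Schwarz at time $t/2$. But the step you flag as the main obstacle is left open, and the route you suggest for it fails. Summing one-dimensional Nash or Sobolev inequalities over lines only reproduces the one-dimensional exponent $\norm u_2^6\leq C\norm u_1^4\,\mathcal E(u)$, hence decay $t^{-1/2}$; a two-dimensional inequality has to couple both directions. What you need is $\norm h_2^4\leq2\norm h_1^2\,\mathcal E(h)$, and the paper gets it by Fourier splitting. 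Since $\abs{\widehat h}\leq\norm h_1$ and $\theta(p)\geq2(p_1^2+p_2^2)/\pi^2$, one has $\norm h_2^2\leq(\pi R/8)\norm h_1^2+R^{-1}\mathcal E(h)$ for every $R>0$, and optimizing in $R$ gives the constant $\pi/2<2$. Then $\tfrac{d}{dt}\norm{u_t}_2^{-2}\geq1$, so $\norm{u_t}_2^2\leq(1+t)^{-1}$ and $p_t^\omega(x,y)\leq(1+t/2)^{-1}=2/(t+2)$; there is nothing to tune.

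For the derivative you take a genuinely different route from the paper. The paper stays quenched and works in $\ell^1$. Differentiating the rate-two Poisson representation gives $\sup_x\sum_y\abs{(\mathcal G_\omega P_s)(x,y)}\leq\E\abs{N_s/s-2}\leq\sqrt{2/s}$, with $N_s$ Poisson of mean $2s$. Pairing this with \eqref{eq:heat-kernel} in $\mathcal G_\omega P_t=(\mathcal G_\omega P_{t/2})P_{t/2}$ gives $2/[\sqrt t(1+t/4)]$ directly, with no optimization and no issue of non-monotone energy.

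Your Fourier/energy route is sound qualitatively, but with your choices it cannot reach the stated constants. Suppose $\norm{G_p^*w}^2\leq C\ip w{-S_pw}$ and the splitting time is averaged over $[at,bt]$. As $t\downarrow0$ the method gives at best $\sqrt{C/2}/\sqrt{(b-a)t}$, so you need $C\leq8(b-a)$. The constant $16$ you get by combining your $A_p$ bound with $\norm{S_p}\leq4$ via the triangle inequality therefore cannot work. Even $C=4$ on your window $[t/4,t/2]$ yields only $2\sqrt2/\bigl(\sqrt t\,[(1+t/4)(1+t/2)]^{1/2}\bigr)$, which implies $8t^{-3/2}$ but not the middle bound.

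It can be repaired. Write $G_p^*=\sum_i(V_i-I)$ with $V_i=\tfrac{1-\omega(0,i)}2e^{ip_i}T_{e_i}+\tfrac{1+\omega(0,i)}2e^{-ip_i}T_{-e_i}$; this $V_i$ is unitary and $V_i+V_i^*=e^{ip_i}T_{e_i}+e^{-ip_i}T_{-e_i}$, so $\norm{G_p^*w}^2\leq4\ip w{-S_pw}$. Then average over $s\in[t/4,3t/4]$ and use $\int_{\T^2}\norm{v_s}^2\dd m\leq(1+s)^{-1}$ and $\int_{\T^2}\norm{w_s}^2\dd m\leq(1+s)^{-1}$. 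This is the $\ell^2$ bound from Nash, not the bound $2/(s+2)$ you wrote. The result is exactly
\[
\abs{f'(t)}\leq\frac2t\Bigl(\frac{2}{1+t/4}\Bigr)^{1/2}\Bigl(\frac{t/2}{1+t/4}\Bigr)^{1/2}=\frac{2}{\sqrt t(1+t/4)}\, .
\]
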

	
	\begin{proof}
		We prove \eqref{eq:heat-kernel} by a Nash argument; see also
		\citet*[Proposition~3]{KT17}.  Fix $\omega$, write
		$P_t=e^{t\mathcal G_\omega}$, and let $S_\omega$ be the symmetric part
		of $\mathcal G_\omega$.  By \eqref{eq:discrete-drift}, $S_\omega$ is the
		rate-two simple-random-walk generator.  Fourier splitting gives
		\[
		\norm h_2^4\leq2\norm h_1^2\ip h{-S_\omega h}
		\qquad(h\in\ell^1(\Z^2))\, .
		\]
		Counting measure is invariant, so $P_t\delta_y$ and $P_t^*\delta_x$
		are nonnegative with $\ell^1$ norm one.  The energy identity and the
		Nash inequality bound both squared $\ell^2$ norms by $(1+t)^{-1}$.
		The semigroup identity and Cauchy--Schwarz then give
		\[
		p_t^\omega(x,y)
		=\ip{P_{t/2}^*\delta_x}{P_{t/2}\delta_y}
		\leq\frac1{1+t/2}\, .
		\]

		For $s>0$, differentiating the rate-two Poisson representation of $P_s$
		gives
		\[
		\sup_x\sum_y\abs{(\mathcal G_\omega P_s)(x,y)}
		\leq\E\abs{N_s/s-2}\leq\sqrt{2/s}\, ,
		\]
		where $N_s$ has the Poisson distribution of mean $2s$.
		Apply this bound and \eqref{eq:heat-kernel} to the two factors in
		$\mathcal G_\omega P_t=(\mathcal G_\omega P_{t/2})P_{t/2}$ to obtain
		$\abs{\partial_t p_t^\omega(x,y)}\leq2/[\sqrt t(1+t/4)]$.
		Averaging over $\omega$ proves \eqref{eq:time-derivative}.
	\end{proof}
	
	\section{Open questions}\label{sec:open}
	\begin{problem}[Mean-square displacement]\label{prob:powerlaw}
		Is the annealed mean-square displacement of order $t^{4/3}$ as
		$t\to\infty$?
		Can the renormalization scheme of \citet*[Section~3]{ABK26} be
		adapted to this setting?
	\end{problem}
	
	\begin{problem}[A decay rate]\label{prob:pointwise}
	Can one prove that $\overline p_t(0,0)=O(t^{-\beta})$ as $t\to\infty$
	for some $\beta>1$? Redner's prediction \citep*[(3)]{Red89} suggests return probabilities of order $t^{-4/3}$.
\end{problem}

	\begin{problem}[Which orientations are transient]\label{prob:which}
		Which orientation fields give a recurrent walk, and which give a transient
		one?  Theorem~\ref{thm:main} gives transience for $\P$-almost every
		environment.  A uniform random shift by a vector in $\{0,1\}^2$ of the
		alternating environment of Section~\ref{ssec:main-results} gives a stationary
		ergodic law with mean-zero line orientations under which the walk is
		recurrent almost surely.

		For the partially oriented walk, \citet*[Section~4]{GLN08T} ask a related
		question when an irrational rotation determines the horizontal orientations.
		Fix an irrational $\alpha$.  Direct the line at height $y\in\Z$ to the right
		if the fractional part of $y\alpha$ is less than $1/2$, and to the left
		otherwise; vertical edges can be crossed in either direction.
		Is this walk recurrent or transient?
	\end{problem}
	
	\bibliographystyle{plainnat}
	\bibliography{references}
	
\end{document}